\newcommand{\bea}{\begin{eqnarray}}
\newcommand{\eea}{\end{eqnarray}}
\newcommand{\beq}{\begin{equation}}
\newcommand{\eeq}{\end{equation}}
\newcommand{\beas}{\begin{eqnarray*}}
\newcommand{\eeas}{\end{eqnarray*}}
\title{Global behavior of temporal discretizations for Volterra integrodifferential equations with certain nonsmooth kernels}
\author{Wenlin Qiu
\thanks{School of Mathematics and Statistics, Hunan Normal University, Changsha, Hunan 410081, P. R. China. (Email: {\tt qwllkx12379@163.com}).}
        }
\begin{document}

\maketitle

\begin{abstract} In this work, the z-transform is presented to analyze time-discrete solutions for Volterra integrodifferential equations (VIDEs) with nonsmooth multi-term kernels in the Hilbert space, and this class of continuous problem was first considered and analyzed by Hannsgen and Wheeler (SIAM J Math Anal 15 (1984) 579-594). This work discusses three cases of kernels $\beta_q(t)$ included in the integrals for the multi-term VIDEs, from which we use corresponding numerical techniques to approximate the solution of multi-term VIDEs in different cases. Firstly, for the case of $\beta_1(t), \beta_2(t) \in \mathrm{L}_1(\mathbb{R}_+)$, the Crank-Nicolson (CN) method and interpolation quadrature (IQ) rule are applied to time-discrete solutions of the multi-term VIDEs; secondly, for the case of $\beta_1(t)\in \mathrm{L}_1(\mathbb{R}_+)$ and $\beta_2(t)\in \mathrm{L}_{1,\text{loc}}(\mathbb{R}_+)$, second-order backward differentiation formula (BDF2) and second-order convolution quadrature (CQ) are employed to discretize the multi-term problem in the time direction; thirdly, for the case of $\beta_1(t), \beta_2(t)\in \mathrm{L}_{1,\text{loc}}(\mathbb{R}_+)$, we utilize the CN method and trapezoidal CQ (TCQ) rule to approximate temporally the multi-term problem. Then for the discrete solution of three cases, the long-time global stability and convergence are proved based on the z-transform and certain appropriate assumptions. Furthermore, the long-time estimate of the third case is confirmed by the numerical tests.
\end{abstract}

\begin{keywords}
Volterra integrodifferential equations, nonsmooth multi-term kernels, z-transform, convolution and interpolation quadrature, long-time global stability and convergence
\end{keywords}

\begin{AMS}
     65M12, 65M22, 45K05, 45E10
\end{AMS}

\pagestyle{myheadings}
\thispagestyle{plain}
\markboth{}{Global behavior of temporal discretizations for VIDEs}

\section{Introduction}
In this work, consider the following VIDEs with multi-term nonsmooth kernels
\begin{equation}\label{eq1.1}
	\begin{split}
            u'(t)   + \int_{0}^{t} \mathbf{B}(t-s)u(s)ds &= f(t), \quad t>0, \\
             u(0) & = u_0,
    \end{split}
\end{equation}
in which we assume that
\begin{equation}\label{eq1.2}
	\begin{split}
           \mathbf{B}(t) = \sum\limits_{q=1}^{m} \beta_q(t) \mathbf{B}_q, \quad \int_0^1 \beta_q(t) dt < \infty, \quad 0\leq \beta_q(\infty) < \beta_q(0^+) \leq \infty,
    \end{split}
\end{equation}
where $\mathbf{B}_q$ is densely positive self-adjoint linear operator, defined in a dense subspace $\mathcal{D}(\mathbf{B}_q) \in \mathrm{H}$, where $\mathrm{H}$ indicates the Hilbert space, and $\beta_q(t)$ is assumed to be real-valued and positive definite on $(0,\infty)$ with $1\leq q \leq m <\infty$. The problem \eqref{eq1.1} can be found in some valuable applications, such as electrodynamics, continuum mechanics, thermodynamics, the population biology and so on, see \cite{Pruss,Renardy} and references therein for more details.

In fact, for the theoretical and numerical researches of problem \eqref{eq1.1}, many scholars have developed some excellent works regarding long-time behavior of solutions. Hannsgen and Wheeler \cite{Hannsgen1} first considered the asymptotic behavior of the solution of problem \eqref{eq1.1} with completely monotonic kernels, and established the following weighted estimates
\begin{equation*}
	\begin{split}
              \int_{0}^{\infty} \rho(t)\| u(t) \| dt \leq  \mathcal{C} \| u_0 \|,
    \end{split}
\end{equation*}
where $\rho(t)$ is a weight function, $\mathcal{C}$ is a positive constant independent of $u(t)$, and $\|\cdot\|$ indicates the norm in $\mathrm{H}$. Then, Noren \cite{Noren1,Noren2} extended the completely monotonic kernels of \eqref{eq1.2} to the convex kernels, which generalized the results in \cite{Hannsgen1}. Recently, based on the theoretical framework of \cite{Hannsgen1}, Xu utilized the backward Euler method and Lubich's first-order CQ rule to prove the weighted asymptotic stability \cite{Xu1} and weighted asymptotic convergence \cite{Xu2}. Subsequently, by employing the BDF2 method and Lubich's second-order CQ rule, Xu deduced the weighted asymptotic stability \cite{Xu3} and weighted asymptotic convergence \cite{Xu4} regarding second-order time-discrete schemes.

\vskip 1mm
Without loss of generality, we below consider the problem \eqref{eq1.1} with two-term nonsmooth kernels, that is
\begin{equation}\label{eq1.3}
	\begin{split}
           \frac{\partial u}{\partial t}  + \sum\limits_{q=1}^{2}\int_{0}^{t} \beta_q(t-s) \mathbf{B}_q u(s)ds &= f(t), \quad t>0, \\
             u(0) & = u_0,
    \end{split}
\end{equation}
from which, denote $\Gamma(\cdot)$ as the Gamma function, and the positive-type kernels $\beta_q(t)$, $q=1,2$ are main assumed to satisfy the following three cases, i.e., \par
\textbf{Case I:} $\beta_q(t)\in \mathrm{L}_1(\mathbb{R}_+)$, $q=1,2$; \par
\textbf{Case II:} $\beta_1(t)\in \mathrm{L}_1(\mathbb{R}_+)$ and $\beta_2(t) = \frac{t^{\alpha-1}}{\Gamma(\alpha)}\in \mathrm{L}_{1,\text{loc}}(\mathbb{R}_+)$ with $0<\alpha<1$; \par
\textbf{Case III:} $\beta_q(t) = \frac{t^{\alpha_q-1}}{\Gamma(\alpha_q)}$, $q=1,2$ with $0<\alpha_q<1$.
Hannsgen and Wheeler \cite{Hannsgen1} pointed out that problem \eqref{eq1.3} arises in a linear model for heat flow in a rectangular, orthotropic
material with memory in which the axes of orthotropy are parallel to the edges of the rectangle, see \cite{Carslaw,MacCamy}. Due to the practical applications of \eqref{eq1.3}, some numerical studies were considered to solve this problem with kernels of \textbf{Case III}, for instance, Hu et al. \cite{Hu} constructed and discussed a backward Euler finite difference scheme, and Qiu et al. \cite{Qiu1} further developed and analyzed a BDF2 finite difference scheme. After that, Qiu et al. \cite{Qiu2} designed an exponentially convergent Sinc approach for approximating \eqref{eq1.3}. Cao et al. \cite{Cao} considered a localized meshless method for problem \eqref{eq1.3}. Recently, Qiu \cite{Qiu3} utilized the product integration rule to formulate and analyze an accurate second-order scheme for the problem of type \eqref{eq1.3}, based on the temporal graded meshes. Although much work has existed on solving this type of problem, it is still under development. These studies motivate and inspire our following work.

\vskip 0.2mm
The main purpose of this paper is to use the z-transform to analyze and discuss the long-time behavior of the time-discrete solution of problem \eqref{eq1.3}, where the kernels in the integral terms are real-valued and positive definite instead of completely monotonic kernels, which relaxes the conditions of \cite{Xu1,Xu2,Xu3,Xu4}. At first, we employ the Laplace transform and the Paserval relation to prove the stability of continuous problem \eqref{eq1.3} in the norm $\|\cdot\|_{L^2(H^{m,s_0})}$ (see Theorem \ref{theorem2.5}). Then, aiming at the kernels of \textbf{Case I}--\textbf{Case III}, we develop three kinds of techniques to solve the problem \eqref{eq1.3}: (i) for the first case, we apply the CN method and IQ rule to time-discrete solutions of \eqref{eq1.3}; (ii) for the first case, we apply the CN method and IQ rule to obtain time-discrete solutions of \eqref{eq1.3}; second, the BDF2 method and mixed IQ-CQ rule are used to approximate temporally \eqref{eq1.3}; third, the CN method and TCQ rule are applied to the temporal discretization of \eqref{eq1.3}, from which, three classes of numerical schemes are analyzed by the z-transform, and we establish their long-time global stability and error estimates. Finally, for the third case, we construct fully discrete schemes by spatial finite difference approximations, and numerical experiments are carried out to verify our theoretical results (see Theorem \ref{theorem6.4}).

\vskip 0.2mm
The remainder of this work is organized as follows. In Section \ref{section2}, some helpful notations and some properties of Laplace transform are given, and the stability of continuous problem \eqref{eq1.3} is constructed. In Section \ref{section3}, some significant properties regarding the z-transform are given and deduced. Section \ref{section4} devotes to the establishment and analysis of the Crank-Nicolson IQ scheme. In Section \ref{section5}, the BDF2 IQ-CQ scheme is formulated and discussed. Section \ref{section6} designs and analyzes a Crank-Nicolson TCQ scheme. Then, numerical examples are provided in Section \ref{section7} to validate the theoretical estimates. Lastly, the brief remarks are concluded in Section \ref{section8}.

\vskip 0.2mm
 Throughout this article, $\mathcal{C}$ denotes a positive constant that is independent of the spatial and temporal step sizes, and may be not necessarily the same on each occurrence.

\section{Preliminaries}\label{section2}
In this section, we shall introduce some notations and lemmas for further analysis.

\subsection{Some notations}\label{subsection2.1}
First, we present some helpful notations. Denote the inner product for $L^2(\Omega)$ space, that is
   \begin{equation*}
    \begin{array}{cc}
          ( w, v ) = \int_{\Omega} w \overline{v} d\Omega,
   \end{array}
   \end{equation*}
and for $0\leq m \in \mathbb{Z}_+$, let $H^m(\Omega)$ be the Sobolev space on $\Omega$ with the norm
   \begin{equation*}
    \begin{array}{cc}
         \|w\|_{H^m} = \sqrt{\sum_{0\leq \tilde{\sigma}_1+\tilde{\sigma}_2 \leq m}\bigg\|\frac{\partial^{\tilde{\sigma}_1+\tilde{\sigma}_2}w}{\partial x^{\tilde{\sigma}_1}\partial y^{\tilde{\sigma}_2}}\bigg\|^{2}},
   \end{array}
   \end{equation*}
from which $\|\cdot\|$ denotes the standard $L^2$ norm, or written as $\|\cdot\|_{H^0}$ for the convenience. Also, we set $H^1_0=H^1\cap\{w|w=0 \; \text{on} \; \partial\Omega\}$. Then for $w: [0,T]\longrightarrow H^m(\Omega)$, denote
   \begin{equation*}
    \begin{array}{cc}
      \|w\|_{L^2(H^m)}=\Big(\int_0^T\|w\|_{m}^{2}dt\Big)^{1/2} \quad \text{with} \quad  \|w\|_{L^2(H^0)}= \|w\|_{L^2(L^2)},
    \end{array}
   \end{equation*}
where $\|w\|_{m}=\|w\|_{H^m}$, and for $w: [0,\infty)\longrightarrow H^m(\Omega)$, we denote the norm by
   \begin{equation*}
    \begin{array}{cc}
       \|w\|_{L^2(H^{m,s_0})}=\Big(\int_0^\infty\|w\|_{m}^{2}e^{-2s_0t}dt\Big)^{1/2}, \qquad s_0>0.
    \end{array}
   \end{equation*}

\subsection{The Laplace transform}\label{subsection2.2}
For the further discussion, we introduce the Laplace transform of a function on $[0,\infty)$, that is
\begin{equation}\label{eq2.1}
    \begin{array}{cc}
      \mathscr{L}[u] := \hat{u}(s) = \int_{0}^{\infty} e^{-st}u(t) dt,
    \end{array}
   \end{equation}
and its certain properties are reviewed as follows.

\begin{lemma}\label{lemma2.1} \cite{Yan} (Paserval relation) For $s_0>0$, we have
\begin{eqnarray*}
\begin{aligned}
         \int_{-\infty}^{+\infty} \hat{u}(s_0+\mathrm{i}\eta) \hat{v}(s_0-\mathrm{i}\eta) d\eta
         = 2\pi  \int_{0}^{+\infty} e^{-2s_0t} u(t)v(t) dt,
\end{aligned}
\end{eqnarray*}
and that
\begin{eqnarray*}
\begin{aligned}
         \int_{-\infty}^{+\infty} \Big\|\hat{u}(s_0+\mathrm{i}\eta)\Big\|^2_{\widetilde{\mathrm{H}}}d\eta
         = 2\pi \int_{0}^{+\infty} e^{-2s_0t} \|u(t)\|^2 dt,
\end{aligned}
\end{eqnarray*}
from which, $\mathrm{i}^2=-1$ and $\widetilde{\mathrm{H}}$ is the complexification of space $\mathrm{H}=L^2(\Omega)$.
\end{lemma}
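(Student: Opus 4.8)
The plan is to reduce both identities to the classical Parseval/Plancherel theorem for the Fourier transform by a single change of variable. The essential observation is that evaluating the Laplace transform \eqref{eq2.1} along the vertical line $\mathrm{Re}(s)=s_0$ turns it into the Fourier transform of an exponentially damped function. Concretely, for fixed $s_0>0$ I would set $g_u(t)=e^{-s_0 t}u(t)$ for $t\geq 0$ and $g_u(t)=0$ for $t<0$, and likewise define $g_v$. Then, writing $\mathcal{F}[g](\eta)=\int_{-\infty}^{+\infty}g(t)e^{-\mathrm{i}\eta t}\,dt$ for the Fourier transform, one has
\begin{eqnarray*}
\begin{aligned}
  \hat{u}(s_0+\mathrm{i}\eta)=\int_0^{\infty} e^{-s_0 t}u(t)\,e^{-\mathrm{i}\eta t}\,dt = \mathcal{F}[g_u](\eta),
  \qquad
  \hat{v}(s_0-\mathrm{i}\eta)=\mathcal{F}[g_v](-\eta),
\end{aligned}
\end{eqnarray*}
where the second identity uses that flipping the sign of $\eta$ converts the Fourier kernel to $e^{+\mathrm{i}\eta t}$, matching the $e^{-(s_0-\mathrm{i}\eta)t}$ appearing in $\hat{v}$.

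For the first identity I would invoke the bilinear Parseval relation $\int_{-\infty}^{+\infty}\mathcal{F}[f](\eta)\,\mathcal{F}[g](-\eta)\,d\eta = 2\pi\int_{-\infty}^{+\infty}f(t)g(t)\,dt$, which itself follows from Fourier inversion (equivalently, from $\int_{-\infty}^{+\infty}e^{\mathrm{i}\eta(s-t)}\,d\eta=2\pi\delta(s-t)$). Applying it with $f=g_u$ and $g=g_v$ yields
\begin{eqnarray*}
\begin{aligned}
  \int_{-\infty}^{+\infty}\hat{u}(s_0+\mathrm{i}\eta)\hat{v}(s_0-\mathrm{i}\eta)\,d\eta
  =2\pi\int_{-\infty}^{+\infty}g_u(t)g_v(t)\,dt
  =2\pi\int_0^{+\infty}e^{-2s_0 t}u(t)v(t)\,dt,
\end{aligned}
\end{eqnarray*}
since the product $g_u g_v$ supplies exactly the weight $e^{-2s_0 t}$ and is supported on $[0,\infty)$.

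The second identity is the $\widetilde{\mathrm{H}}$-valued Plancherel version. For each fixed $x\in\Omega$ I would apply the scalar Plancherel theorem $\int_{-\infty}^{+\infty}|\mathcal{F}[g](\eta)|^2\,d\eta=2\pi\int_{-\infty}^{+\infty}|g(t)|^2\,dt$ to the scalar damped function $t\mapsto e^{-s_0 t}u(t)(x)$, obtaining $\int_{-\infty}^{+\infty}|\hat{u}(s_0+\mathrm{i}\eta)(x)|^2\,d\eta=2\pi\int_0^{+\infty}e^{-2s_0 t}|u(t)(x)|^2\,dt$, and then integrate over $\Omega$, interchanging the $\eta$- and $x$-integrations by Tonelli's theorem (the integrand being nonnegative) to recover $\|\hat{u}(s_0+\mathrm{i}\eta)\|^2_{\widetilde{\mathrm{H}}}$ on the left and $\|u(t)\|^2$ on the right.

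I expect the only genuine subtlety to be the integrability bookkeeping rather than any deep step: to make the Fourier--Plancherel machinery rigorous one needs $g_u,g_v\in L^2(\mathbb{R})$, i.e. the weighted finiteness $\int_0^{\infty}e^{-2s_0 t}\|u\|^2\,dt<\infty$ together with its analogue for $v$, which is precisely membership in the space measured by $\|\cdot\|_{L^2(H^{0,s_0})}$ from Subsection~\ref{subsection2.1}. Under that hypothesis the Fubini/Tonelli interchange and the passage from the distributional $\delta$-identity to a rigorous Parseval pairing are standard, and the lemma follows.
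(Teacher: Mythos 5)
The paper gives no proof of this lemma at all: it is quoted with a citation to [Yan] (Yan and Fairweather), so there is no internal argument to compare against. Your proposal is correct, and it is the canonical argument behind the cited result: restricting the Laplace transform to the line $\Re s=s_0$ turns it into the Fourier transform of the damped, causally-extended function $g_u(t)=e^{-s_0t}u(t)$, after which the first identity is the bilinear Parseval relation (note this identity only makes literal sense for scalar-valued $u,v$, since $\hat{u}\hat{v}$ is written as a product — your scalar treatment is the right reading), and the second identity follows by applying scalar Plancherel to $t\mapsto e^{-s_0t}u(t)(x)$ for a.e.\ $x\in\Omega$ and interchanging the $\eta$- and $x$-integrations by Tonelli. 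Your closing remark is also the right one: the lemma tacitly assumes the weighted integrability $\|u\|_{L^2(H^{0,s_0})}<\infty$ (and its analogue for $v$), which is exactly what makes $g_u,g_v$ square-integrable and the Plancherel machinery rigorous; the paper never states this hypothesis explicitly, and your proof makes that dependence visible.
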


Then, we present the following lemma (regarding positive-type kernel), which will often be used in this work.
\begin{lemma}\label{lemma2.2} \cite{Nohel} $\beta(t)\in \mathrm{L_{1,loc}}(0,\infty)$ is positive-type if and only if $\Re \left(\hat{\beta}(s)\right) \geq 0$ for $s\in\Phi:=\{s\in\varsigma, \Re (s) >0\}$, from which, $\hat{\beta}$ indicates the Laplace transform of the kernel $\beta(t)$.
\end{lemma}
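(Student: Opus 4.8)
The plan is to reduce the whole equivalence to a single Parseval identity that rewrites the (weighted) positive-type quadratic form of $\beta$ as a nonnegatively-weighted integral of $\Re\hat\beta$ along a vertical line in the right half-plane. Throughout I would take $\beta$ real and test against real functions $\varphi\in C_c[0,\infty)$ (or any class for which $e^{-s_0t}\varphi(t)$ and $e^{-s_0t}(\beta*\varphi)(t)$ lie in $L^2(0,\infty)$, so that Lemma \ref{lemma2.1} applies), and I would work with the weighted form of positive-type, namely that $\int_0^\infty e^{-2s_0t}\varphi(t)(\beta*\varphi)(t)\,dt\ge 0$ for every $s_0>0$. After the substitution $\psi(t)=e^{-s_0t}\varphi(t)$, $\beta_{s_0}(t)=e^{-s_0t}\beta(t)$ this is equivalent to ordinary positive-type of the integrable kernel $\beta_{s_0}$, and it is exactly the form used in the subsequent stability estimates; the exponential weight is also what makes $L^{1,\mathrm{loc}}$ kernels tractable.

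Next I would record the two ingredients I need: the convolution theorem $\widehat{\beta*\varphi}(s)=\hat\beta(s)\hat\varphi(s)$, and the reality relations $\hat\varphi(s_0-\mathrm{i}\eta)=\overline{\hat\varphi(s_0+\mathrm{i}\eta)}$ and $\hat\beta(s_0-\mathrm{i}\eta)=\overline{\hat\beta(s_0+\mathrm{i}\eta)}$, both valid because $\varphi,\beta$ are real. Applying the Parseval relation of Lemma \ref{lemma2.1} with $u=\varphi$ and $v=\beta*\varphi$, then inserting the convolution theorem, collapses the cross term $\hat\varphi(s_0+\mathrm{i}\eta)\hat\varphi(s_0-\mathrm{i}\eta)$ into $|\hat\varphi(s_0+\mathrm{i}\eta)|^2$. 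Since the left-hand side is real, taking real parts and using $\Re\hat\beta(s_0-\mathrm{i}\eta)=\Re\hat\beta(s_0+\mathrm{i}\eta)$ gives
\[
\int_0^\infty e^{-2s_0t}\varphi(t)(\beta*\varphi)(t)\,dt=\frac{1}{2\pi}\int_{-\infty}^{+\infty}\bigl|\hat\varphi(s_0+\mathrm{i}\eta)\bigr|^2\,\Re\hat\beta(s_0+\mathrm{i}\eta)\,d\eta.
\]
The ``if'' direction is then immediate: if $\Re\hat\beta(s)\ge 0$ on $\Phi$, the integrand is nonnegative, the quadratic form is $\ge 0$ for every real $\varphi$, and $\beta$ is of positive type.

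For the converse I would argue by contraposition from the same identity. Suppose $\Re\hat\beta(s_*)<0$ at some $s_*=s_0+\mathrm{i}\eta_*$ with $s_0>0$. Since $\hat\beta$ is analytic, hence continuous, on the open right half-plane, $\Re\hat\beta(s_0+\mathrm{i}\eta)$ remains negative on a neighborhood of $\eta_*$ (and, being even in $\eta$, also near $-\eta_*$). I would then pick a real $\varphi$ whose nonnegative even weight $|\hat\varphi(s_0+\mathrm{i}\eta)|^2$ places essentially all its mass on that band — e.g.\ a narrow modulated packet so that $e^{-s_0t}\varphi(t)$ has Fourier transform concentrated near $\pm\eta_*$ — making the right-hand side negative and contradicting positive-type. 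The main obstacle I anticipate is precisely this test-function construction: the weight $|\hat\varphi(s_0+\mathrm{i}\eta)|^2$ must be realizable as the squared modulus of the Fourier transform of a real, causal function, so it cannot be a genuine point mass, and one must check that enough of its $L^2$-mass can be forced into the region where $\Re\hat\beta<0$ while the complementary contribution stays controlled. A cleaner route that avoids explicit packets is to note that, as $\varphi$ ranges over the admissible real functions, the attainable weights $|\hat\varphi(s_0+\mathrm{i}\cdot)|^2$ form a family rich enough to test sign, so nonnegativity of the integral against all of them forces $\Re\hat\beta(s_0+\mathrm{i}\eta)\ge 0$ for a.e.\ $\eta$; continuity then upgrades this to every $\eta$ and every $s_0>0$, i.e.\ to all of $\Phi$. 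The remaining points — the decay/integrability hypotheses guaranteeing the validity of Lemma \ref{lemma2.1} and of the convolution theorem — are routine once the weight $e^{-s_0t}$ is in force.
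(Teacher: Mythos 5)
The paper offers no proof of Lemma \ref{lemma2.2} to compare against --- it is quoted directly from \cite{Nohel} --- so your attempt must be judged against the classical argument. Your Parseval identity is correct and does settle the ``if'' direction: when $\Re\hat\beta\ge 0$ on $\Phi$, every weighted form is nonnegative, and letting $s_0\to 0^{+}$ over a fixed support interval $[0,T]$ (dominated convergence) recovers the standard finite-interval inequality $\int_0^T\varphi\,(\beta*\varphi)\,dt\ge 0$. Your caution about the packet construction is also well placed, and that construction can indeed be carried out (Fej\'er-kernel concentration, plus analyticity of $\hat\beta$ to upgrade from a.e.\ to everywhere).

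The genuine gap is in the ``only if'' direction, and it is created by your decision to adopt the weighted form as the definition of positive type. As you yourself observe, the substitution $\psi=e^{-s_0t}\varphi$ identifies the weighted form for $\beta$ with the \emph{ordinary} form for the damped kernel $\beta_{s_0}(t)=e^{-s_0t}\beta(t)$. Hence your contraposition --- concentrating $|\hat\varphi(s_0+\mathrm{i}\cdot)|^2$ where $\Re\hat\beta(s_0+\mathrm{i}\eta_*)<0$ --- makes the \emph{weighted} form negative, i.e.\ it shows that $\beta_{s_0}$ is not of positive type. This does not contradict the lemma's hypothesis, which is positive type of $\beta$ itself in the standard unweighted, finite-interval sense of \cite{Nohel} (the sense matching the paper's discrete analogue, Lemma \ref{lemma3.6}). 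What is missing is the bridge ``$\beta$ of positive type $\Rightarrow$ $\beta_{s_0}$ of positive type for each $s_0>0$''. That implication is true but has real content: positive type of a causal kernel is equivalent to integral positive definiteness of the even extension $\beta(|x|)$, the factor $e^{-s_0|x|}$ is positive definite (its Fourier transform is the Poisson kernel $2s_0/(s_0^2+\xi^2)$), and positive definiteness survives the product via the representation $e^{-s_0|x|}=\int_{\mathbb{R}}e^{\mathrm{i}\xi x}\,(s_0/\pi)(s_0^2+\xi^2)^{-1}d\xi$ and Fubini; proving it through the frequency-domain characterization instead would be circular. Without this step you have proved the equivalence ``$\Re\hat\beta\ge 0$ on $\Phi$ iff all weighted forms are nonnegative'', which is a different statement. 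A cleaner repair, essentially the classical proof, avoids both packets and the bridge: test the standard form with the truncated damped oscillations $e^{-s_0t}\cos(\eta t)$ and $e^{-s_0t}\sin(\eta t)$ on $[0,T]$, let $T\to\infty$, and add the two results; the nuisance terms $\pm\tfrac14\Re\bigl(\hat\beta(s)/s\bigr)$ cancel, leaving exactly $\Re\hat\beta(s)/(2s_0)\ge 0$. Finally, note a defect of the statement itself rather than of your proof: for a general $\beta\in\mathrm{L_{1,loc}}(0,\infty)$ the transform $\hat\beta$ need not exist anywhere on $\Phi$ (the weight does not rescue this --- consider $\beta(t)=e^{2s_0t}$), so Laplace transformability must be assumed throughout; your closing remark that these hypotheses are ``routine once the weight is in force'' is too optimistic.
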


\vskip 1mm
Next, we give the following lemma under the inhomogeneous case $f\neq 0$.
\begin{lemma}\label{lemma2.3} If $s_0>0$, and
\begin{eqnarray*}
\begin{aligned}
         \int_{0}^{+\infty} w'(t)w(t) e^{-2s_0t} dt \leq \int_{0}^{+\infty} f(t)w(t) e^{-2s_0t} dt,
\end{aligned}
\end{eqnarray*}
then we have
\begin{eqnarray*}
\begin{aligned}
         \int_{0}^{+\infty} w^2(t) e^{-2s_0t} dt \leq   s_0^{-1} w^2(0) + s_0^{-2}\int_{0}^{+\infty} f^2(t)e^{-2s_0t} dt.
\end{aligned}
\end{eqnarray*}
\end{lemma}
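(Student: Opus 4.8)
The plan is to exploit the elementary identity $w^{(1)}(t)\,w(t)=\tfrac12\frac{d}{dt}\big(w^2(t)\big)$ in order to turn the hypothesis into a coercivity-type estimate after integrating by parts against the weight $e^{-2s_0t}$, and then to close the inequality by a carefully balanced (weighted) Young inequality. First I would rewrite the left-hand side and integrate by parts:
\begin{equation*}
\int_{0}^{\infty} w^{(1)}(t)w(t)\,e^{-2s_0t}\,dt
=\frac12\int_{0}^{\infty}\frac{d}{dt}\big(w^2(t)\big)\,e^{-2s_0t}\,dt
=\Big[\tfrac12\,w^2(t)e^{-2s_0t}\Big]_{0}^{\infty}
+s_0\int_{0}^{\infty}w^2(t)\,e^{-2s_0t}\,dt.
\end{equation*}
Working in the weighted space $L^2(H^{0,s_0})$ one has the decay $w^2(t)e^{-2s_0t}\to0$ as $t\to\infty$, so the boundary contribution reduces to $-\tfrac12 w^2(0)$. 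Combining this with the assumed inequality yields
\begin{equation*}
s_0\int_{0}^{\infty}w^2(t)\,e^{-2s_0t}\,dt-\tfrac12\,w^2(0)
\;\leq\;\int_{0}^{\infty}f(t)w(t)\,e^{-2s_0t}\,dt .
\end{equation*}

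To control the right-hand side I would apply the pointwise Young inequality with the weights tuned so that only half of the coercive term is consumed, namely $f w\leq \tfrac{s_0}{2}\,w^2+\tfrac{1}{2s_0}\,f^2$, integrated against $e^{-2s_0t}$. This gives
\begin{equation*}
s_0\int_{0}^{\infty}w^2 e^{-2s_0t}\,dt-\tfrac12\,w^2(0)
\;\leq\;\frac{s_0}{2}\int_{0}^{\infty}w^2 e^{-2s_0t}\,dt
+\frac{1}{2s_0}\int_{0}^{\infty}f^2 e^{-2s_0t}\,dt,
\end{equation*}
and after moving the $w^2$ integral to the left and multiplying by $2/s_0$ one obtains exactly the claimed bound
$\int_{0}^{\infty}w^2 e^{-2s_0t}\,dt\leq s_0^{-1}w^2(0)+s_0^{-2}\int_{0}^{\infty}f^2 e^{-2s_0t}\,dt$. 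An equivalent route, if one prefers to avoid choosing the split a priori, is to bound the right-hand side by Cauchy--Schwarz and solve the resulting quadratic inequality in $\big(\int w^2 e^{-2s_0t}\big)^{1/2}$, then apply $2\sqrt{ab}\leq a+b$; this produces the same constants.

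The only genuinely delicate point is the vanishing of the boundary term at infinity, which is where the strict positivity $s_0>0$ and membership of $w$ in the weighted $L^2$ framework are essential; for functions of controlled exponential growth the weight $e^{-2s_0t}$ forces $w^2(t)e^{-2s_0t}\to0$, and the argument is rigorous. A secondary point worth emphasizing is the precise choice of the Young weights $\tfrac{s_0}{2}$ and $\tfrac{1}{2s_0}$: a naive split such as $fw\leq s_0 w^2+\tfrac{1}{4s_0}f^2$ would cause the coercive term to cancel entirely and yield a vacuous estimate, so keeping a strictly positive residual of $s_0\int w^2 e^{-2s_0t}$ is what makes the final constants $s_0^{-1}$ and $s_0^{-2}$ emerge.
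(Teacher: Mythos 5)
Your proof is correct and follows essentially the same route as the paper: integration by parts against the weight $e^{-2s_0t}$ followed by the Young split $fw \le \tfrac{s_0}{2}w^2 + \tfrac{1}{2s_0}f^2$, yielding exactly the same constants. The only minor difference is that you insist on the boundary term at infinity vanishing, whereas the paper simply retains $\lim_{t\to\infty}\bigl(\tfrac{1}{2s_0}e^{-2s_0t}w^2(t)\bigr)$ on the favorable side of the inequality and discards it by non-negativity, which sidesteps any need to justify pointwise decay of $w$.
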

\begin{proof} First, we use the integration by parts to get
\begin{eqnarray*}
\begin{aligned}
         \int_{0}^{+\infty}  & e^{-2s_0t}[w^2(t)-w^2(0)] dt + \lim\limits_{t\rightarrow \infty} \left( \frac{1}{2s_0} e^{-2s_0t}w^2(t)\right) \\
         &= \frac{1}{s_0} \int_{0}^{+\infty}  e^{-2s_0t} w^{(1)}(t)w(t)dt \leq  \frac{1}{s_0} \int_{0}^{+\infty}  e^{-2s_0t} f(t)w(t)dt  \\
         & \leq \frac{1}{s_0} \int_{0}^{+\infty}  e^{-2s_0t} \left( \frac{f^2(t)}{2s_0} + \frac{s_0w^2(t)}{2} \right) dt,
\end{aligned}
\end{eqnarray*}
which naturally can obtain
\begin{eqnarray*}
\begin{aligned}
       \frac{1}{2} \int_{0}^{+\infty}  e^{-2s_0t}w^2(t) dt \leq \int_{0}^{+\infty}  e^{-2s_0t}w^2(0) dt
                + \frac{1}{2s_0^2} \int_{0}^{+\infty}  e^{-2s_0t}f^2(t)dt.
\end{aligned}
\end{eqnarray*}
The proof is finished.
\end{proof}

\vskip 2mm
Further, the following corollary is established by Lemma \ref{lemma2.3}.
\begin{corollary}\label{corollary2.4} Let $s_0>0$. If
\begin{eqnarray*}
\begin{aligned}
         \int_{0}^{+\infty} \left(\frac{du(t)}{dt},u(t)\right) e^{-2s_0t} dt \leq \int_{0}^{+\infty} (f(t),u(t)) e^{-2s_0t} dt,
\end{aligned}
\end{eqnarray*}
then we yield
\begin{eqnarray*}
\begin{aligned}
         \int_{0}^{+\infty} \|u(t)\|^2 e^{-2s_0t} dt \leq  s_0^{-1}\|u(0)\|^2 + s_0^{-2}\int_{0}^{+\infty} \|f(t)\|^2e^{-2s_0t} dt.
\end{aligned}
\end{eqnarray*}
\end{corollary}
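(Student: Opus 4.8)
The plan is to derive Corollary~\ref{corollary2.4} as a direct consequence of Lemma~\ref{lemma2.3} by specializing the scalar inequality to the Hilbert-space norm via the Parseval-type identity of Lemma~\ref{lemma2.1}. The essential observation is that the hypotheses and conclusions of the corollary are exactly the Lemma~\ref{lemma2.3} statements with the scalar products $w^{(1)}(t)w(t)$ and $f(t)w(t)$ replaced by the inner products $(du/dt,u)$ and $(f,u)$, and with $w^2(t)$ replaced by $\|u(t)\|^2$. So the whole task is to transport the one-dimensional estimate to the vector-valued setting without re-running the integration-by-parts argument.

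First I would fix an arbitrary real direction by considering the scalar function $w(t):=\|u(t)\|$, or more precisely work with the real part of the inner products. Observe that $\tfrac{d}{dt}\|u(t)\|^2 = 2\,\Re(du/dt,u)$, so the hypothesis $\int_0^\infty \Re(du/dt,u)\,e^{-2s_0t}\,dt \le \int_0^\infty \Re(f,u)\,e^{-2s_0t}\,dt$ controls exactly the quantity $\tfrac12\int_0^\infty \tfrac{d}{dt}\|u\|^2\, e^{-2s_0t}\,dt$. I would then mimic the integration-by-parts step of Lemma~\ref{lemma2.3}, but carried out on $\|u(t)\|^2$ in place of $w^2(t)$, picking up the boundary term $\tfrac{1}{2s_0}\|u(0)\|^2$ and the vanishing limit $\lim_{t\to\infty}\tfrac{1}{2s_0}e^{-2s_0t}\|u(t)\|^2$. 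The Cauchy--Schwarz inequality $|\Re(f,u)|\le\|f\|\,\|u\|$ followed by Young's inequality $\|f\|\,\|u\|\le \tfrac{1}{2s_0}\|f\|^2 + \tfrac{s_0}{2}\|u\|^2$ reproduces the scalar chain verbatim, after which absorbing the $\tfrac{s_0}{2}\|u\|^2$ term on the left yields the claimed bound.

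Alternatively, and perhaps more cleanly, I would invoke Lemma~\ref{lemma2.3} as a black box by setting $w(t)=\|u(t)\|$ and $f_{\mathrm{scalar}}(t)=\|f(t)\|$ and verifying that the scalar hypothesis of Lemma~\ref{lemma2.3} holds: using $w^{(1)}(t)w(t)=\|u\|\,\tfrac{d}{dt}\|u\| = \Re(du/dt,u)$ (valid where $\|u\|\neq 0$) together with Cauchy--Schwarz $\Re(f,u)\le\|f\|\,\|u\| = f_{\mathrm{scalar}}(t)\,w(t)$, the vector hypothesis implies the scalar hypothesis, and Lemma~\ref{lemma2.3} then delivers exactly the stated conclusion with $w(0)=\|u(0)\|$ and $f_{\mathrm{scalar}}^2=\|f\|^2$.

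The main technical obstacle is the differentiability of $t\mapsto\|u(t)\|$ at points where $u(t)=0$: there $\tfrac{d}{dt}\|u\|$ need not exist, so the substitution $w^{(1)}w=\Re(du/dt,u)$ must be justified with care. I expect to handle this by working directly with the smooth quantity $\|u(t)\|^2$ (which is differentiable whenever $u$ is, with $\tfrac{d}{dt}\|u\|^2=2\Re(du/dt,u)$) rather than with $\|u(t)\|$, so that no loss of regularity arises; this is why the direct integration-by-parts route of the first approach is the safer one to write out. The remaining steps are routine applications of Cauchy--Schwarz and Young's inequality, exactly paralleling the proof of Lemma~\ref{lemma2.3}.
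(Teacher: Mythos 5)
Your proposal is correct and takes essentially the same route as the paper: the paper gives no separate proof of Corollary~\ref{corollary2.4}, presenting it as an immediate Hilbert-space analogue of Lemma~\ref{lemma2.3}, and your first argument---re-running the integration-by-parts step on $\|u(t)\|^2$ using $\tfrac{d}{dt}\|u(t)\|^2=2\,\Re\big(\tfrac{du}{dt},u\big)$, then Cauchy--Schwarz and Young's inequality, and absorbing the $\tfrac{s_0}{2}\|u\|^2$ term---is exactly that intended argument. Your observation that one should work with $\|u(t)\|^2$ rather than $\|u(t)\|$ to avoid non-differentiability at zeros of $u$ is a sound refinement that the paper leaves implicit.
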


\vskip 0.2mm
Then based on above analyses, we get the following theorem.
\begin{theorem} \label{theorem2.5} (Stability) Let $u$ be the solution of the continuous problem \eqref{eq1.3}. If $s_0>0$ and $\beta_q(t)\in \mathrm{L_{1,loc}}(0,\infty)$, $q=1,2$, then we obtain
      \begin{equation*}
     \begin{split}
           \|u\|^2_{L^2(H^{0,s_0})} \leq  s_0^{-1} \|u_0\|^2 + s_0^{-2}\|f\|^2_{L^2(H^{0,s_0})}.
     \end{split}
     \end{equation*}
\end{theorem}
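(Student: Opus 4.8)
The plan is to test \eqref{eq1.1} against the solution $u$ in the $L^2(\Omega)$ inner product, weight by $e^{-2s_0t}$, integrate over $(0,\infty)$, and then show that the two spatial terms contribute nonnegatively, thereby reducing the estimate exactly to the hypothesis of Corollary \ref{corollary2.4}. Pairing \eqref{eq1.1} with $u$ and integrating gives
\begin{equation*}
\int_0^\infty \Big(\tfrac{du}{dt},u\Big)e^{-2s_0t}\,dt
+ \int_0^\infty (Au,u)\,e^{-2s_0t}\,dt
+ \int_0^\infty \big((\beta*Au)(t),u(t)\big)e^{-2s_0t}\,dt
= \int_0^\infty (f,u)\,e^{-2s_0t}\,dt .
\end{equation*}
If the middle two integrals are nonnegative, then $\int_0^\infty(\tfrac{du}{dt},u)e^{-2s_0t}\,dt\le\int_0^\infty(f,u)e^{-2s_0t}\,dt$, which is precisely the premise of Corollary \ref{corollary2.4}; its conclusion (with $u(0)=u_0$) is exactly the claimed bound.

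The elliptic term is immediate: since $A$ is self-adjoint and positive-definite, $(Au(t),u(t))\ge 0$ pointwise, so its weighted integral is nonnegative. The memory term is the crux, and I would treat it on the frequency side. Applying the Hilbert-space (inner-product) form of the Parseval relation in Lemma \ref{lemma2.1} to $w=\beta*Au$ and $v=u$, together with the convolution rule $\widehat{\beta*Au}(s)=\hat\beta(s)\,A\hat u(s)$ and the fact that $A$ commutes with the Laplace transform, yields
\begin{equation*}
\int_0^\infty \big((\beta*Au)(t),u(t)\big)e^{-2s_0t}\,dt
= \frac{1}{2\pi}\int_{-\infty}^{+\infty} \hat\beta(s_0+\mathrm{i}\eta)\,
\big(A\hat u(s_0+\mathrm{i}\eta),\hat u(s_0+\mathrm{i}\eta)\big)_{\widetilde{\mathrm{H}}}\,d\eta .
\end{equation*}
Since $A$ is self-adjoint positive-definite, the factor $(A\hat u,\hat u)_{\widetilde{\mathrm{H}}}$ is real and nonnegative for each $\eta$; and since the left-hand side is real, only $\Re\hat\beta(s_0+\mathrm{i}\eta)$ survives on the right. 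Invoking the positive-type property of $\beta$ through Lemma \ref{lemma2.2}, which for $s_0>0$ gives $\Re\hat\beta(s_0+\mathrm{i}\eta)\ge 0$, I conclude that this integral is nonnegative.

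The main obstacle is exactly this memory term. It is essential to justify that $\beta$ is of positive type so that Lemma \ref{lemma2.2} applies; this is where the complete monotonicity of the kernel underlying \eqref{eq1.2} enters, since a completely monotonic function is the Laplace transform of a positive measure and is therefore positive-type. One must also be careful to use the Parseval identity in its inner-product form so that the self-adjoint positivity of $A$ can be exploited, and to note that the reality of $(A\hat u,\hat u)_{\widetilde{\mathrm{H}}}$ is what discards the imaginary part of $\hat\beta$. A secondary technical point is to ensure that $u$ and $Au$ lie in the weighted space $L^2(H^{0,s_0})$ so that all transforms and integrals converge, which follows from the standing assumptions on $f$ and $u_0$. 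Once both spatial contributions are shown nonnegative, Corollary \ref{corollary2.4} closes the argument at once.
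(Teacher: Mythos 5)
Your proposal is correct and follows essentially the same route the paper intends: the paper's proof is exactly the skeleton ``based on Lemmas \ref{lemma2.1}--\ref{lemma2.3} and Corollary \ref{corollary2.4}'', and you have filled it in the expected way --- positivity of $(Au,u)$ for the elliptic term, the Parseval relation (Lemma \ref{lemma2.1}) combined with the positive-type characterization $\Re\hat\beta\ge 0$ (Lemma \ref{lemma2.2}, via complete monotonicity) for the memory term, and Corollary \ref{corollary2.4} to close the estimate.
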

\begin{proof} First, we apply the Laplace transform to \eqref{eq1.3}, then
      \begin{equation}\label{eq2.2}
     \begin{split}
             \frac{\widehat{d u}(s)}{dt}  + \sum\limits_{q=1}^{2} \hat{\beta}_q(s) \mathbf{B}_q \hat{u}(s) = \hat{f}(s), \quad s\in \Phi.
     \end{split}
     \end{equation}
Taking the inner product of \eqref{eq2.2} with $\hat{u}(s)$ that
      \begin{equation*}
     \begin{split}
             \left(\frac{\widehat{d u}(s)}{dt},\hat{u}(s)\right) + \sum\limits_{q=1}^{2} \hat{\beta}_q(s) \left(\mathbf{B}_q \hat{u}(s),\hat{u}(s)\right) = \left(\hat{f}(s),\hat{u}(s)\right),
     \end{split}
     \end{equation*}
from which, we let $s=s_0+\mathrm{i}\eta$, and take the real part of above formula, then
      \begin{equation*}
     \begin{split}
             \Re\left(\frac{\widehat{d u}(s_0+\mathrm{i}\eta)}{dt},\hat{u}(s_0+\mathrm{i}\eta)\right) &+ \sum\limits_{q=1}^{2} \Re\left(\hat{\beta}_q(s_0+\mathrm{i}\eta) \right) \left(\mathbf{B}_q \hat{u}(s_0+\mathrm{i}\eta),\hat{u}(s_0+\mathrm{i}\eta)\right) \\
              & \leq \Re\left(\hat{f}(s_0+\mathrm{i}\eta),\hat{u}(s_0+\mathrm{i}\eta)\right).
     \end{split}
     \end{equation*}
Further, we use Lemma \ref{lemma2.2} and the positivity of the linear operator $\mathbf{B}_q$ to get
      \begin{equation}\label{eq2.3}
     \begin{split}
             \Re\left(\frac{\widehat{d u}(s_0+\mathrm{i}\eta)}{dt},\hat{u}(s_0+\mathrm{i}\eta)\right) \leq \Re\left(\hat{f}(s_0+\mathrm{i}\eta),\hat{u}(s_0+\mathrm{i}\eta)\right).
     \end{split}
     \end{equation}
Next, assuming that $\{\tilde{\varphi}_l\}_{l=1}^{\infty}$ is an orthogonal basis in $\mathrm{H}$ such that $v(t)\in \mathrm{H}$, then we obtain a representation $v(t) = \sum_{l=1}^{\infty} (v(t), \tilde{\varphi}_l)  \tilde{\varphi}_l$. Therefore, employing Lemma \ref{lemma2.1}, we have
      \begin{equation}\label{eq2.4}
     \begin{split}
            \int_{-\infty}^{+\infty} & \left(\frac{\widehat{d u}}{dt},\hat{u}\right) (s_0+\mathrm{i}\eta) d\eta
            = \sum\limits_{l=1}^{\infty} \int_{-\infty}^{+\infty} \left(\frac{\widehat{d u}}{dt},\tilde{\varphi}_l\right)(s_0+\mathrm{i}\eta)   \left(\hat{u},\tilde{\varphi}_l\right)(s_0-\mathrm{i}\eta)d\eta \\
            & = 2\pi \sum\limits_{l=1}^{\infty}  \int_{0}^{+\infty} e^{-2s_0 t} \left(\frac{d u}{dt},\tilde{\varphi}_l\right) \left(u,\tilde{\varphi}_l\right) dt
            = 2\pi  \int_{0}^{+\infty} e^{-2s_0 t} \left(\frac{d u}{dt},u\right)dt,
     \end{split}
     \end{equation}
and
      \begin{equation}\label{eq2.5}
     \begin{split}
            \int_{-\infty}^{+\infty} & \left(\hat{f},\hat{u}\right) (s_0+\mathrm{i}\eta) d\eta
            = \sum\limits_{l=1}^{\infty} \int_{-\infty}^{+\infty} \left(\hat{f},\tilde{\varphi}_l\right)(s_0+\mathrm{i}\eta)   \left(\hat{u},\tilde{\varphi}_l\right)(s_0-\mathrm{i}\eta)d\eta \\
            & = 2\pi  \int_{0}^{+\infty} e^{-2s_0 t} \left(f,u\right)dt.
     \end{split}
     \end{equation}
By substituting \eqref{eq2.4} and \eqref{eq2.5} into \eqref{eq2.3}, we have
      \begin{equation}\label{eq2.6}
     \begin{split}
            \int_{0}^{+\infty} e^{-2s_0 t} \left(\frac{d u}{dt},u\right)dt \leq \int_{0}^{+\infty} e^{-2s_0 t} \left(f,u\right)dt,
     \end{split}
     \end{equation}
from which, utilizing Corollary \ref{corollary2.4}, we complete the proof.
\end{proof}

\section{The z-transform}\label{section3}
 Here, we first present the z-transform regarding a real sequence or H-value sequence $\{g_n\}_{n=0}^{\infty}$, namely
      \begin{equation}\label{eq3.1}
     \begin{split}
          G(z):=   \mathcal{Z}(\{g_n\}_{n=0}^{\infty})(z) = \sum\limits_{n=0}^{\infty} g_n z^{-n}.
     \end{split}
     \end{equation}

\vskip 0.2mm
Then, some significant properties of the z-transform are introduced as follows.

\begin{proposition}\label{proposition3.1}\cite{Jury} (Convolution theorem) Let $(g\circ d)(n):=\sum\limits_{j=0}^{n}g_{n-j}d_j$.
If G(z) and $D(z)$ are the z-transform of sequences $\{g_n\}_{n=0}^{\infty}$ and $\{d_n\}_{n=0}^{\infty}$, respectively, then
    \begin{equation}\label{eq3.2}
     \begin{split}
         \mathcal{Z}\Big((g\circ d)(n)\Big)(z) = G(z) D(z).
     \end{split}
     \end{equation}
\end{proposition}

\begin{proposition}\label{proposition3.2}\cite{Jury}
Let G(z) and $D(z)$ be the z-transform of the sequences $\{g_n\}_{n=0}^{\infty}$ and $\{d_n\}_{n=0}^{\infty}$, respectively. If
    \begin{equation}\label{eq3.3}
     d_n =
     \begin{cases}
         0, & n<1, \\
         g_{n-1}, & n\geq 1,
     \end{cases}
     \end{equation}
then we have $D(z)=G(z)/z$.
\end{proposition}

\vskip 0.2mm
\begin{proposition}\label{proposition3.3}\cite{Jury} (Inverse z-transform) Assume that $G(z)$ is the z-transform of $\{g_m\}_{m=0}^{\infty}$ and the contour $\Gamma_0$ encloses all poles of $G(z)$, then
    \begin{equation}\label{eq3.4}
     \begin{split}
         g_m = \frac{1}{2\pi\mathrm{i}} \oint_{\Gamma_0} G(z) z^{m-1} dz,  \quad  m = 0,1,2, \cdots.
     \end{split}
     \end{equation}
\end{proposition}

\vskip 0.2mm
Then for $\forall \epsilon>0$, let $\delta(t-n\epsilon)$ be the unit pulse function at $t=n\epsilon$ and denote
    \begin{equation}\label{eq3.5}
     \begin{split}
          g_{\epsilon}(t) = \sum\limits_{n=0}^{\infty} g_n \delta(t-n\epsilon).
     \end{split}
     \end{equation}
By applying the Laplace transform to \eqref{eq3.5}, we can get
    \begin{equation}\label{eq3.6}
     \begin{split}
          G_{\epsilon}(s) := \mathscr{L}\left[g_{\epsilon}(t)\right] = \sum\limits_{n=0}^{\infty} g_n e^{-n\epsilon s},
     \end{split}
     \end{equation}
from which we use the fact that $\mathscr{L}[\delta(t-n\epsilon)]=\int_{0}^{\infty}\delta(t-n\epsilon)e^{-st}dt=e^{-n\epsilon s}$.

\vskip 0.2mm
Consequently, the connection between the Laplace transform and z-transform can be established by
    \begin{equation}\label{eq3.7}
     \begin{split}
           G(z) = G_{\epsilon}(s)\big|_{s=(1/\epsilon)\ln z} = \mathscr{L}\left[g_{\epsilon}(t)\right]\big|_{z=e^{\epsilon s}}.
     \end{split}
     \end{equation}

\vskip 0.2mm
Next, for the further analysis, we define $\delta_{\epsilon}(t)=\int_{0}^{\infty}\delta(t-n\epsilon)$ and denote
    \begin{equation}\label{eq3.8}
     \begin{split}
           G_{\epsilon}(s) = \mathscr{L}\left[g_{\epsilon}(t)\right] = \mathscr{L}\left[ g(t) \delta_{\epsilon}(t) \right],
     \end{split}
     \end{equation}
from which, $g(t)$ is expressed as a continuous function over $[0,\infty)$, such that $g(n \epsilon)=g_n$ for $n=0,1,2,\cdots$. Then we can obtain the following lemma.

\vskip 0.5mm
\begin{lemma}\label{lemma3.4} Let two sampled functions $G_{\epsilon}(s)$ and $H_{\epsilon}(s)$ be Laplace transforms of $g_{\epsilon}(t)$ and $H_{\epsilon}(t)$, respectively. Then for $c>0$, it holds that
    \begin{equation}\label{eq3.9}
     \begin{split}
           \sum\limits_{m=0}^{\infty} (g_m, h_m) e^{-m\epsilon s} &=  \mathscr{L}\left[ \big(g(t), h(t) \big) \cdot \delta_{\epsilon}(t) \right] \\
          &  = \frac{\epsilon}{2\pi\mathrm{i}}\int_{c-\mathrm{i}\pi/\epsilon}^{c+\mathrm{i}\pi/\epsilon} \Big(G_{\epsilon}(\vartheta), \overline{H_{\epsilon}(s-\vartheta)} \Big)_{\widetilde{\mathrm{H}}} d\vartheta,
     \end{split}
     \end{equation}
in which $\bar{z}$ indicates the complex conjugate of $z$.
\end{lemma}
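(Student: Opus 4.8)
The plan is to prove the two equalities in \eqref{eq3.9} separately, the first being essentially definitional and the second being a discrete (one-period) version of the complex convolution theorem for Laplace transforms.

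For the first equality I would simply insert the sampling representation. Since $g(n\epsilon)=g_n$ and $h(n\epsilon)=h_n$, the sifting property of the Dirac comb $\delta_\epsilon(t)=\sum_{n\geq0}\delta(t-n\epsilon)$ gives $\big(g(t),h(t)\big)\delta_\epsilon(t)=\sum_{m\geq0}(g_m,h_m)\,\delta(t-m\epsilon)$; applying $\mathscr{L}$ term by term together with $\mathscr{L}[\delta(t-m\epsilon)]=e^{-m\epsilon s}$ (as in \eqref{eq3.6}) yields $\mathscr{L}\big[(g,h)\delta_\epsilon\big]=\sum_{m\geq0}(g_m,h_m)e^{-m\epsilon s}$. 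The only care needed here is the legitimacy of the term-by-term interchange, which holds once $\Re s$ is large enough for the series to converge absolutely.

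The heart of the argument is the second equality. First I would record the periodic inversion formula
\[
g_m=\frac{\epsilon}{2\pi\mathrm{i}}\int_{c-\mathrm{i}\pi/\epsilon}^{c+\mathrm{i}\pi/\epsilon}G_\epsilon(q)\,e^{m\epsilon q}\,dq ,
\]
which I obtain from the inverse z-transform \eqref{eq3.4} of Proposition \ref{proposition3.3} under the substitution $z=e^{\epsilon q}$ dictated by \eqref{eq3.7}: a circle $\Gamma_0=\{|z|=e^{c\epsilon}\}$ enclosing the poles unwraps to the vertical segment from $c-\mathrm{i}\pi/\epsilon$ to $c+\mathrm{i}\pi/\epsilon$, one period of the $2\pi\mathrm{i}/\epsilon$-periodic function $G_\epsilon$, and $z^{m-1}dz=\epsilon\,e^{m\epsilon q}\,dq$. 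Equivalently, this is the orthogonality relation $\frac{\epsilon}{2\pi}\int_{-\pi/\epsilon}^{\pi/\epsilon}e^{\mathrm{i}(m-n)\epsilon\tau}\,d\tau=\delta_{mn}$. I would then substitute this expression for $g_m$ into $\sum_m(g_m,h_m)e^{-m\epsilon s}$, pull the scalar factors $\tfrac{\epsilon}{2\pi\mathrm{i}}$ and $e^{m\epsilon q}$ out of the first slot of the inner product, and interchange the (absolutely convergent) sum over $m$ with the integral over $q$.

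It remains to refold the sum into the second argument, and this is the step I expect to be the most delicate. Because $(\,\cdot\,,\cdot)_{\widetilde{\mathrm{H}}}$ is conjugate-linear in its second entry, the scalar $e^{-m\epsilon(s-q)}$ standing in front of $(G_\epsilon(q),h_m)$ passes into the second slot as its conjugate, giving $\big(G_\epsilon(q),\sum_{m}e^{-m\epsilon\overline{(s-q)}}h_m\big)_{\widetilde{\mathrm{H}}}$; recognizing $\sum_m e^{-m\epsilon\overline{(s-q)}}h_m=\overline{H_\epsilon(s-q)}$ then produces \eqref{eq3.9}. The main obstacles are thus (i) the careful conjugation bookkeeping in the complexified inner product, where one must track exactly which factors are conjugated when moving them across the pairing, and (ii) the analytic justification of the term-by-term and Fubini interchanges, which forces $c$ (equivalently $|z|=e^{c\epsilon}$) to exceed the radius of convergence of the z-transform series, so that all sums converge absolutely and the continuous inner product may be exchanged with the limit.
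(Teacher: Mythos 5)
Your proposal is correct, but there is nothing in the paper to compare it against: the paper never proves Lemma \ref{lemma3.4}, it simply quotes it from \cite{Xu1} (with the remark that Lemmas \ref{lemma3.4}--\ref{lemma3.5} extend the real-sequence results of \cite{Jury} to H-valued sequences). Your argument is precisely the classical proof that this citation stands for: the first equality is the sifting property of the Dirac comb together with term-by-term Laplace transform, and the second follows by inserting the one-period inversion formula
\begin{equation*}
g_m=\frac{\epsilon}{2\pi\mathrm{i}}\int_{c-\mathrm{i}\pi/\epsilon}^{c+\mathrm{i}\pi/\epsilon}G_\epsilon(q)\,e^{m\epsilon q}\,dq
\end{equation*}
(the inverse z-transform of Proposition \ref{proposition3.3} unwrapped via $z=e^{\epsilon q}$), exchanging the absolutely convergent sum with the integral, and refolding the sum into the second slot of the inner product. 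Your decision to justify the inversion formula by the orthogonality relation $\frac{\epsilon}{2\pi}\int_{-\pi/\epsilon}^{\pi/\epsilon}e^{\mathrm{i}(m-n)\epsilon\tau}d\tau=\delta_{mn}$ rather than by residues is the right move, since ``poles'' of an $\widetilde{\mathrm{H}}$-valued $G(z)$ are not meaningful and absolute convergence outside a sufficiently large circle is all one actually has. One hypothesis is used silently, though, and you should state it: the final identification $\sum_m e^{-m\epsilon\overline{(s-q)}}h_m=\overline{H_\epsilon(s-q)}$ requires $\overline{h_m}=h_m$, i.e.\ the $h_m$ must lie in the real space $\mathrm{H}$ rather than in its complexification $\widetilde{\mathrm{H}}$; otherwise $\overline{H_\epsilon(s-q)}=\sum_m \overline{h_m}\,e^{-m\epsilon\overline{(s-q)}}$ and the refolding fails. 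This is harmless here, since every sequence to which the paper applies the lemma ($U^n$, $f^n$, error sequences) is real H-valued, but it is an assumption, not a consequence of the setup.
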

\begin{proof} First, utilizing \cite[(1.129)]{Jury}, and assuming $g(t)$ contains no impulses and is initially zero, then we have
    \begin{equation*}
     \begin{split}
          & \sum\limits_{m=0}^{\infty} (g_m, h_m) e^{-m\epsilon s} =  \mathscr{L}\left[ \big(g(t), h(t) \big) \cdot \delta_{\epsilon}(t) \right]
         \\
          & = \frac{1}{2\pi\mathrm{i} } \int_{w_0-\mathrm{i}\infty}^{w_0+\mathrm{i}\infty} \left\{ \mathscr{L}\left[ \big(g(t), h(t) \big) \right](w) \right\} \; \mathscr{L}\left[ \delta_{\epsilon}(t) \right](s-w) dw
     \end{split}
     \end{equation*}
with $\Re(s)>w_0>\max\left\{\text{real part of poles of} \; \mathscr{L} \big(g(t), h(t) \big) \right\}$. Then use the inverse Laplace transform, we get
    \begin{equation*}
     \begin{split}
           \mathscr{L}\left[ \big(g(t), h(t) \big) \right](w) &= \int_{0}^{\infty} \left(  \frac{1}{2\pi\mathrm{i} } \int_{c_0-\mathrm{i}\infty}^{c_0+\mathrm{i}\infty} g(\vartheta) e^{\vartheta t}d\vartheta, h(t)  \right) e^{wt} dt \\
           & = \frac{1}{2\pi\mathrm{i} } \int_{c_0-\mathrm{i}\infty}^{c_0+\mathrm{i}\infty} \left(  \hat{g}(\vartheta), \overline{\hat{h}(w-\vartheta)} \right)_{\widetilde{\mathrm{H}}} d\vartheta.
     \end{split}
     \end{equation*}
Using above two formulas and changing the order of integration regarding $w$ and $\vartheta$, we further obtain
    \begin{equation*}
     \begin{split}
           \mathscr{L}\left[ \big(g(t), h(t) \big) \cdot \delta_{\epsilon}(t) \right]
           & = \frac{1}{2\pi\mathrm{i} } \int_{c_0-\mathrm{i}\infty}^{c_0+\mathrm{i}\infty}  \frac{1}{2\pi\mathrm{i} } \int_{w_0-\mathrm{i}\infty}^{w_0+\mathrm{i}\infty} \left(  \hat{g}(\vartheta), \overline{\hat{h}(w-\vartheta)} \right)_{\widetilde{\mathrm{H}}} \sum\limits_{l=0}^{\infty}e^{-l\epsilon(s-w)} dw d\vartheta \\
           & = \frac{1}{2\pi\mathrm{i} } \int_{c_0-\mathrm{i}\infty}^{c_0+\mathrm{i}\infty} \left( \hat{g}(\vartheta),  \overline{  \frac{1}{2\pi\mathrm{i} } \int_{w_0-\mathrm{i}\infty}^{w_0+\mathrm{i}\infty} \hat{h}(w-\vartheta) \frac{dw}{1-e^{-\epsilon(s-w)}}}   \right)_{\widetilde{\mathrm{H}}} d\vartheta \\
           & = \frac{1}{2\pi\mathrm{i} } \int_{c_0-\mathrm{i}\infty}^{c_0+\mathrm{i}\infty} \left( \hat{g}(\vartheta), \overline{H_\epsilon(s-\vartheta)}\right)_{\widetilde{\mathrm{H}}} d\vartheta.
     \end{split}
     \end{equation*}
Note that $H_\epsilon(s-\vartheta)$ is only a function of $e^{\epsilon(s-\vartheta)}$. In above equation, we divide the range of integration into intervals of length $\tilde{w}_0=2\pi/\epsilon$, therefore we yield
    \begin{equation*}
     \begin{split}
           \mathscr{L}\left[ \big(g(t), h(t) \big) \cdot \delta_{\epsilon}(t) \right](s)
           & = \frac{1}{2\pi\mathrm{i} } \sum\limits_{m=-\infty}^{+\infty} \int_{c_0+(m-1/2)\mathrm{i}\tilde{w}_0}^{c_0+(m+1/2)\mathrm{i}\tilde{w}_0} \left( \hat{g}(\vartheta), \overline{H_\epsilon(s-\vartheta)}\right)_{\widetilde{\mathrm{H}}} d\vartheta.
     \end{split}
     \end{equation*}
Here, altering $\vartheta$ to $\vartheta+\mathrm{i}m\tilde{w}_0$ and since $H_\epsilon(s-\vartheta)$ is only a function of $e^{-\epsilon(s-\vartheta)}$ with $e^{ \mathrm{i}m\tilde{w}_0\epsilon}=1$, we obtain
    \begin{equation*}
     \begin{split}
           \mathscr{L}\left[ \big(g(t), h(t) \big) \cdot \delta_{\epsilon}(t) \right](s)
           & = \frac{1}{2\pi\mathrm{i} } \sum\limits_{m=-\infty}^{+\infty} \int_{c_0-\mathrm{i}\tilde{w}_0/2}^{c_0+\mathrm{i}\tilde{w}_0/2} \left( \hat{g}(\vartheta+\mathrm{i}m\tilde{w}_0), \overline{H_\epsilon(s-\vartheta)}\right)_{\widetilde{\mathrm{H}}} d\vartheta.
     \end{split}
     \end{equation*}
Then from \cite[(1.141)-(1.142)]{Jury}, above equation can be simplified as
    \begin{equation*}
     \begin{split}
           \mathscr{L}\left[ \big(g(t), h(t) \big) \cdot \delta_{\epsilon}(t) \right](s)
           & = \frac{\epsilon}{2\pi\mathrm{i}} \int_{c_0-\mathrm{i}\tilde{w}_0/2}^{c_0+\mathrm{i}\tilde{w}_0/2}  \left( G_{\epsilon}(\vartheta), \overline{H_\epsilon(s-\vartheta)}\right)_{\widetilde{\mathrm{H}}} d\vartheta,
     \end{split}
     \end{equation*}
which completes the proof.
\end{proof}

\vskip 2mm
Then from Lemma \ref{lemma3.4}, we replace $e^{\epsilon s}$ and $e^{\epsilon \vartheta}$ by $z$ and $p$, respectively, therefore, $G_{\epsilon}(\vartheta)$ turns into a function of $p$, i.e., $\mathbf{G}(p)$, and $H_{\epsilon}(s-\vartheta)$ becomes the function of $\frac{z}{p}$, i.e., $\mathbf{H}(z/p)$. Naturally, the following lemma holds.

\begin{lemma}\label{lemma3.5}  By a mapping $d\vartheta = (1/(\epsilon e^{\epsilon \vartheta})) dp = (\epsilon^{-1} p^{-1}) dp$, then \eqref{eq3.9} turns into
    \begin{equation}\label{eq3.10}
     \begin{split}
          & \sum\limits_{m=0}^{\infty} (g_m, h_m) z^{-m} =  \mathscr{L}\left[ \big(g(t), h(t) \big) \delta_{\epsilon}(t) \right]  = \frac{1}{2\pi\mathrm{i}}\int_{\Gamma_0} p^{-1} \Big( \mathbf{G}(p), \overline{\mathbf{H}(z/p)} \Big)_{\widetilde{\mathrm{H}}} dp.
     \end{split}
     \end{equation}
\end{lemma}

\vskip 0.2mm
In fact, two lemmas above have extended the case of real-valued sequence \cite{Jury} to that of H-valued sequences, which will be helpful to next analysis. Besides, for our theoretical estimates, a key lemma is introduced as follows.

\vskip 0.5mm
\begin{lemma}\label{lemma3.6}\cite{Lopez-Marcos} If the sequence $\{g_j\}_{j=0}^{\infty}$ is real-valued, satisfying that $G(z)=\sum\limits_{j=0}^{\infty}g_j z^{-j}$ is analytical in $\mathbf{D}=\{z\in \varsigma, \; |z|>1\}$, then for any $0<M \in \mathbb{Z}_+$ and for any $(W_0,W_1,\cdots,W_M)\in R^{M+1}$, it holds that
    \begin{equation*}
     \begin{split}
           \sum\limits_{j=0}^{M} \left( \sum\limits_{p=0}^{j} g_p W_{j-p} \right) W_j \geq 0,
     \end{split}
     \end{equation*}
if and only if $\Re G(z)\geq 0$ for $z\in \mathbf{D}$.
\end{lemma}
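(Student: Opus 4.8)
The plan is to prove both implications by turning the bilinear expression into a Parseval-type integral over the unit circle, where its sign is dictated by $\Re G$. Throughout I will write $W(z)=\sum_{j=0}^{M}W_j z^{-j}$ and use that the coefficients $g_n$ are real, so that on $|z|=1$ we have $\sum_{j=0}^{M}W_j e^{\mathrm{i}j\theta}=\overline{W(e^{\mathrm{i}\theta})}$ and $G(\bar z)=\overline{G(z)}$. Setting $c_j:=\sum_{p=0}^{j}g_p W_{j-p}$, Proposition \ref{proposition3.1} identifies the z-transform of $\{c_j\}$ as $C(z)=G(z)W(z)$.

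First I would record a master identity. Assuming for the moment that $C$ is analytic up to and across $|z|=1$, the inverse z-transform of Proposition \ref{proposition3.3} on $\Gamma_0=\{|z|=1\}$ gives $c_j=\frac{1}{2\pi}\int_0^{2\pi}C(e^{\mathrm{i}\theta})e^{\mathrm{i}j\theta}\,d\theta$; summing against $W_j$ and using $\sum_j W_j e^{\mathrm{i}j\theta}=\overline{W(e^{\mathrm{i}\theta})}$ collapses the double sum to
\begin{equation*}
\sum_{j=0}^{M}\Big(\sum_{p=0}^{j}g_p W_{j-p}\Big)W_j=\frac{1}{2\pi}\int_0^{2\pi}G(e^{\mathrm{i}\theta})\,\big|W(e^{\mathrm{i}\theta})\big|^2\,d\theta .
\end{equation*}
Since the left side is real one may replace $G$ by $\Re G$ on the right, and because $|W(e^{\mathrm{i}\theta})|^2\ge 0$ the sign of the form is governed entirely by $\Re G$ on the circle.

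For the direction $\Re G\ge 0\Rightarrow$ positivity, the obstacle is that $G$ need not be analytic up to $|z|=1$, so the identity does not apply to $G$ itself. I would remove this by a dilation: for $\sigma>1$ set $\tilde G(z)=G(\sigma z)=\sum_n (g_n\sigma^{-n})z^{-n}$, which is analytic on $|z|\ge 1$ and still satisfies $\Re\tilde G\ge 0$ there, since $|\sigma z|>1$. Now $C=\tilde G\,W$ has its only poles inside $|z|<1$ (at $z=0$), so the master identity applies to $\tilde G$ and shows that the form built from $\{g_n\sigma^{-n}\}$ is nonnegative. Letting $\sigma\to 1^{+}$ in this \emph{finite} sum recovers the form built from $\{g_n\}$, which is therefore nonnegative.

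For the converse I would test positivity against a decaying complex geometric sequence. Fix $z_0$ with $|z_0|=R>1$ and put $w_j=z_0^{-j}$, with real and imaginary parts $a_j,b_j$. Since $g_n$ is real, the Hermitian form $\sum_{0\le k\le j\le M}g_{j-k}w_k\overline{w_j}$ has real part exactly $Q_M(a)+Q_M(b)$, a sum of two forms of the stated type, hence nonnegative for every truncation by hypothesis. On the other hand $\sum_{j=0}^\infty c_j\overline{w_j}=C(\bar z_0)=G(\bar z_0)\,W(\bar z_0)=\frac{R^2}{R^2-1}\,\overline{G(z_0)}$, the series converging because $|w_j|=R^{-j}$ decays geometrically while $\limsup_j|c_j|^{1/j}\le 1$ (as $C$ is analytic for $|z|>1$). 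Passing to the limit $M\to\infty$ and taking real parts yields $\frac{R^2}{R^2-1}\Re G(z_0)\ge 0$, i.e. $\Re G(z_0)\ge 0$, as required. The points to watch are the two limiting passages—the dilation $\sigma\to1^{+}$ and the series convergence as $M\to\infty$—together with the contour/Parseval step on $|z|=1$; all three are routine once the regions of analyticity are tracked, and I expect the dilation trick to be the cleanest way to avoid any appeal to boundary values of $G$ or to the Herglotz representation.
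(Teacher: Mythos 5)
The paper never actually proves Lemma \ref{lemma3.6}: the statement is imported wholesale from \cite{Lopez-Marcos}, so there is no internal proof to compare yours against; your argument is correct and makes the result self-contained. The sufficiency half is the standard Parseval--Herglotz argument: writing the quadratic form as $\frac{1}{2\pi}\int_0^{2\pi}\Re G(e^{\mathrm{i}\theta})\,|W(e^{\mathrm{i}\theta})|^2\,d\theta$ needs boundary values that $G$ may not possess, and your dilation $\tilde G(z)=G(\sigma z)$, $\sigma>1$, fixes exactly this point: $\tilde G$ is analytic across $|z|=1$, it inherits $\Re\tilde G(e^{\mathrm{i}\theta})=\Re G(\sigma e^{\mathrm{i}\theta})\ge 0$ because $\sigma e^{\mathrm{i}\theta}\in\mathbf{D}$, and the passage $\sigma\to1^{+}$ is harmless since the quadratic form is a finite sum of terms continuous in $\sigma$. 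The necessity half is also sound: since $\Re\big(w_k\overline{w_j}\big)=a_ka_j+b_kb_j$, the real part of your triangular Hermitian form splits into the two stated quadratic forms at the real vectors $(a_j)$ and $(b_j)$, each nonnegative by hypothesis, and the series $\sum_j c_j\overline{w_j}$ converges absolutely (indeed $\limsup_j|c_j|^{1/j}\le 1$ while $|w_j|=R^{-j}$ with $R>1$), so letting $M\to\infty$ legitimately yields $\frac{R^2}{R^2-1}\,\Re G(z_0)\ge 0$. Two small remarks. First, in the dilation step you should invoke the Laurent coefficient formula on a circle inside the annulus of analyticity rather than Proposition \ref{proposition3.3} as literally stated, since that proposition is phrased for functions whose singularities are poles, whereas $\tilde G(z)W(z)$ may have non-polar singularities in $|z|\le 1/\sigma$; this changes nothing in substance. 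Second, your route is essentially the discrete counterpart of the Nohel--Shea frequency-domain criterion that the paper quotes as Lemma \ref{lemma2.2}, and is presumably close in spirit to the proof in \cite{Lopez-Marcos}; what your write-up buys is that the equivalence is seen to be elementary --- nothing beyond the convolution theorem (Proposition \ref{proposition3.1}) and Cauchy's coefficient formula --- at the cost of the page of analysis that the paper avoids by citation.
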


\section{Crank-Nicolson IQ method for kernels of Case I}\label{section4}
Here, we shall use the Crank-Nicolson method to approximate \eqref{eq1.3} with non-smooth kernels. First, we consider the kernels of \textbf{Case I}.

In order to approximate the integrals of \eqref{eq1.3} formally to the second order, we employ the IQ rule
      \begin{equation}\label{eq4.1}
     \begin{split}
             Q_{n}^{(q)}(\varphi) = \chi_{n,0}^{(q)} \varphi^0 + \sum\limits_{l=1}^{n} w_{n-l}^{(q)} \varphi^{l}, \quad q=1,2,
     \end{split}
     \end{equation}
see \cite{McLean1}; from which, the following relations hold
      \begin{equation}\label{eq4.2}
     \begin{split}
             &   \chi_{0,0}^{(q)} = 0,  \quad 0\leq \chi_{n,0}^{(q)} = \int_{0}^{k} \beta_q(t_n-\zeta) h(\zeta/k) d\zeta, \quad n\geq 1,  \\
             & 0\leq w_{n-l}^{(q)} \equiv \chi_{n,l}^{(q)} = \int_{-k}^{\min(k,t_{n-l})} \beta_q(t_{n-l}-\zeta) h(\zeta/k) d\zeta, \quad n\geq 1, \quad l\geq 1,
     \end{split}
     \end{equation}
where $k$ is the temporal step size and $h(t)=\max(1-|t|, 0)$. Especially,
      \begin{equation}\label{eq4.3}
     \begin{split}
            w_{n}^{(q)} = \int_{-k}^{\min(k,t_{n})} \beta_q(t_{n}-\zeta) h(\zeta/k) d\zeta, \quad n\geq 0.
     \end{split}
     \end{equation}

Then, it follows from \cite[Lemma 4.8]{McLean1} that
      \begin{equation}\label{eq4.4}
     \begin{split}
             \sum\limits_{n=1}^{N} \left( \sum\limits_{l=1}^{n} w_{n-l} V_{l} \right) V_n = k^{-1} \sum\limits_{n=1}^{N} \left( \sum\limits_{l=1}^{n} \left(k\chi_{n,l}^{(q)} \right) V_{l} \right) V_n \geq 0, \quad N\geq 1,
     \end{split}
     \end{equation}
and defining $\breve{\mu}_j^{(q)}=\int_{t_j}^{t_{j+1}}|\beta_q(\zeta)|d\zeta$, we obtain
      \begin{equation}\label{eq4.5}
     \begin{split}
            \big\| (\beta_q \ast \varphi)(t_n)  - Q_{n}^{(q)}(\varphi) \big\|  & \leq 2  \breve{\mu}_{n-1}^{(q)} \int_{0}^{k}\|\varphi_t(s)\|ds  \\
            & + k\sum\limits_{m=2}^{n} \breve{\mu}_{n-m}^{(q)} \int_{t_{m-1}}^{t_{m}} \|\varphi_{tt}(s)\| ds,
     \end{split}
     \end{equation}
where the convolution $(\beta_q \ast \varphi)(t)=\int_{0}^{t}\beta_q(t-s)\varphi(s)ds$. Also, using the assumptions (\textbf{Case I}) regarding the kernels $\beta_q$, we have
      \begin{equation}\label{eq4.6}
     \begin{split}
            \sum\limits_{n=0}^{\infty} w_{n}^{(q)}  \leq w_0^{(q)} + 4 \int_{0}^{\infty} \beta_q(\zeta) d\zeta  < \infty.
     \end{split}
     \end{equation}

\vskip 0.2mm
Thus for $n\geq 1$ and $q=1,2$, we have
      \begin{equation}\label{eq4.7}
     \begin{split}
          Q_{n-\frac{1}{2}}^{(q)}(\varphi) &= \frac{1}{2}\Big( Q_{n}^{(q)}(\varphi) + Q_{n-1}^{(q)}(\varphi) \Big) = \widetilde{\chi}_n^{(q)} \varphi^0  +  \sum\limits_{l=1}^{n} w_{n-l}^{(q)} \varphi^{l-\frac{1}{2}},
     \end{split}
     \end{equation}
where $\widetilde{\chi}_n^{(q)} = \frac{1}{2}\left( \chi_{n,0}^{(q)} + \chi_{n-1,0}^{(q)} - w_{n-1}^{(q)} \right)$ and $\varphi^{n-\frac{1}{2}} = \frac{1}{2}\left( \varphi^{n} + \varphi^{n-1} \right)$.

\vskip 2mm
Here, the Crank-Nicolson method and \eqref{eq4.7} are applied to problem \eqref{eq1.3}, then
   \begin{equation}\label{eq4.8}
   \begin{split}
      \delta_t U^n  & + \sum\limits_{q=1}^{2} Q_{n-1/2}^{(q)}(\mathbf{B}_qU)   = f^{n-\frac{1}{2}},   \quad   n\geq 1, \\
      & U^0=u_0,
   \end{split}
   \end{equation}
where $\delta_t v^n=\frac{v^n-v^{n-1}}{k}$ and $v^{n-\frac{1}{2}}=(v^{n-1}+v^{n} )/2$ for $n\geq 1$.

\vskip 0.5mm
Then applying the z-transform to \eqref{eq4.8}, we yield
   \begin{equation}\label{eq4.9}
   \begin{split}
     \sum\limits_{n=1}^{\infty} \frac{U^n-U^{n-1}}{k} z^{-n}  & +
        \sum\limits_{q=1}^{2} \sum\limits_{n=1}^{\infty} \left(  \widetilde{\chi}_n^{(q)} \mathbf{B}_qU^0  +  \sum\limits_{l=1}^{n} w_{n-l}^{(q)} \mathbf{B}_qU^{l-\frac{1}{2}}  \right) z^{-n} \\
        &= \sum\limits_{n=1}^{\infty}\frac{f^n+f^{n-1}}{2} z^{-n},
   \end{split}
   \end{equation}
and utilizing Propositions \ref{proposition3.1} and \ref{proposition3.2}, we further get
   \begin{equation}\label{eq4.10}
   \begin{split}
        & \frac{2(1-z^{-1})}{1+z^{-1}} \widetilde{U}(z) +  k \sum\limits_{q=1}^{2}\widetilde{w}^{(q)}(z) \mathbf{B}_q\widetilde{U}(z)
       = k\widetilde{F}(z) - \frac{k}{1+z^{-1}} f^0 \\
       & + \frac{2}{1+z^{-1}} U^0 +  \frac{2k}{1+z^{-1}}  \sum\limits_{q=1}^{2} \left( \frac{1}{2}\widetilde{w}^{(q)}(z)-\widetilde{\chi}^{(q)}(z) \right) \mathbf{B}_qU^0,
   \end{split}
   \end{equation}
from which,
   \begin{equation}\label{eq4.11}
   \begin{split}
       & \widetilde{U}(z) = U^0 + U^1 z^{-1} + \cdots + U^n z^{-n} + \cdots, \\
       & \widetilde{F}(z) = f^0 + f^1 z^{-1} + \cdots + f^n z^{-n} + \cdots, \\
       & \widetilde{w}^{(q)}(z) = w_0^{(q)} + w_1^{(q)} z^{-1} + \cdots + w_n^{(q)} z^{-n} + \cdots, \quad q =1,2, \\
       & \widetilde{\chi}^{(q)}(z) = \widetilde{\chi}_1^{(q)} z^{-1} + \widetilde{\chi}_2^{(q)} z^{-2} + \cdots + \widetilde{\chi}_n^{(q)} z^{-n} + \cdots, \quad q =1,2.
   \end{split}
   \end{equation}

\vskip 0.5mm
Then, the following lemma is established by above analyses.
\begin{lemma}\label{lemma4.1} Let the series $\widetilde{w}^{(q)}(z)=\sum_{n=0}^{\infty}w_n^{(q)}z^{-n}$, $q =1,2$, be denoted in \eqref{eq4.11}. Then it holds that $\Re \left(\widetilde{w}^{(q)}(z)\right) \geq 0$.
\end{lemma}
\begin{proof} $(\mathrm{I})$ First, the result \eqref{eq4.4} holds. $(\mathrm{II})$ Then, \eqref{eq4.6} implies that the positive-term series $\sum_{n=0}^{\infty}w_n^{(q)}$ is convergent, and then Abel theorem leads to $\sum_{n=0}^{\infty}w_n^{(q)}z^{-n}$ is convergent in $\mathbf{D}=\{z\in \varsigma, \; |z|>1\}$. Thus, Weierstrass theorem shows that $\sum_{n=0}^{\infty}w_n^{(q)}z^{-n}$ is analytical in $\mathbf{D}$. Combining $(\mathrm{I})$ and $(\mathrm{II})$, the proof is completed by Lemma \ref{lemma3.6}.
\end{proof}

\vskip 2mm
Below we shall deduce the long-time stability of scheme \eqref{eq4.8}, i.e., the following theorem.
\begin{theorem}\label{theorem4.2} Let $U^n$ denoted by \eqref{eq4.8} be the approximate solution of problem \eqref{eq1.3}. Assuming that $\beta_q(t)\in \mathrm{L_1}(\mathbb{R}_+)$, $q=1,2$ and $c>0$, then it holds that
   \begin{equation*}
   \begin{split}
       \|U^n\|^2_A
          \leq  \frac{e^{c}(1+e^{\frac{c}{2}})^4}{(e^{\frac{c}{2}}-1)^6} \Big( k\|f^0\| + 2\|U^0\|  \Big)^2 &  +  \sum\limits_{q=1}^{2} \frac{\|\beta_q\|^2_{\mathrm{L_1}(0,\infty)}(1+e^{\frac{c}{2}})^4k^2}{e^{c}(e^{\frac{c}{2}}-1)^2} \|\mathbf{B}_q U^0\|^2  \\
        &+ \frac{(1+e^{\frac{c}{2}})^4 k^2}{2(e^{c}-1)^2} \left(\sum\limits_{n=0}^{\infty} e^{-\frac{c}{2} n} \|f^n\|\right)^2,
   \end{split}
   \end{equation*}
where the notation $\|U^n\|_A = \sqrt{\sum\limits_{n=0}^{\infty} e^{-cn}\|U^n\|^2}$.
\end{theorem}

\begin{proof} By taking the inner product of \eqref{eq4.10} with $\widetilde{U}(z)$, we get
   \begin{equation*}
   \begin{split}
        & \frac{2(1-z^{-1})}{1+z^{-1}} \left\|\widetilde{U}(z)\right\|^2_{\widetilde{\mathrm{H}}}   + k \sum\limits_{q=1}^{2}\widetilde{w}^{(q)}(z) (\mathbf{B}_q \widetilde{U}(z), \widetilde{U}(z))
       \\
        & = k(\widetilde{F}(z),\widetilde{U}(z))  - \frac{k}{1+z^{-1}} (f^0,\widetilde{U}(z))   + \frac{2}{1+z^{-1}} (U^0,\widetilde{U}(z)) \\
        &   + \frac{2k}{1+z^{-1}}  \sum\limits_{q=1}^{2}\left( \frac{1}{2}\widetilde{w}^{(q)}(z)-\widetilde{\chi}^{(q)}(z) \right) (\mathbf{B}_qU^0,\widetilde{U}(z)).
   \end{split}
   \end{equation*}
Taking the real part of above formula and letting $z=e^{s_0 + \mathrm{i} \eta}$ with any $s_0>0$, then from
   \begin{equation}\label{eq4.12}
   \begin{split}
        & \Re \left(\frac{2(1-z^{-1})}{1+z^{-1}} \right)\Bigl|_{z=e^{s_0 + \mathrm{i} \eta}}  = \frac{2(e^{2s_0}-1)}{ 1 + 2e^{s_0}\cos(\eta) + e^{2s_0} } \geq \frac{2(e^{2s_0}-1)}{(1+e^{s_0})^2}, \\
        & \Re \left(\frac{1}{1+z^{-1}} \right)\Bigl|_{z=e^{s_0 + \mathrm{i} \eta}}  = \frac{1 + e^{-s_0}\cos(\eta)}{1 + 2e^{-s_0}\cos(\eta) + e^{-2s_0}} \leq \frac{1+e^{-s_0}}{(1-e^{-s_0})^2},
   \end{split}
   \end{equation}
   \begin{equation*}
   \begin{split}
        \Xi^{(q)}(z)  \triangleq\frac{2}{1+z^{-1}} & \left( \frac{1}{2}\widetilde{w}^{(q)}(z)-\widetilde{\chi}^{(q)}(z) \right)  = w_0^{(q)} \\
        & + \sum\limits_{n=1}^{\infty}( w_n^{(q)} -\chi_{n,0}^{(q)} ) z^{-n} - \frac{1}{1+z}\chi_{0,0}^{(q)},
   \end{split}
   \end{equation*}
and $\Re\widetilde{w}(z)\geq 0$ (cf. Lemma \ref{lemma4.1}) it follows  that
   \begin{equation}\label{eq4.13}
   \begin{split}
        \frac{2(e^{2s_0}-1)}{(1+e^{s_0})^2}  \left\|\widetilde{U}(z)\right\|_{\widetilde{\mathrm{H}}}
         & \leq  \frac{1+e^{-s_0}}{(1-e^{-s_0})^2} \Big( k\|f^0\| + 2\|U^0\|  \Big) + k \sum\limits_{n=0}^{\infty} e^{-s_0 n} \|f^n\| \\
        & +   k \sum\limits_{q=1}^{2} \left( w_0^{(q)} + e^{-s_0}\sum\limits_{n=1}^{\infty}| w_n^{(q)} -\chi_{n,0}^{(q)} |  \right) \|\mathbf{B}_qU^0\|,
   \end{split}
   \end{equation}
in which the positivity of the operators $\mathbf{B}_1$, $\mathbf{B}_2$ and
   \begin{equation*}
   \begin{split}
      \Re \left(\Xi^{(q)}(z)\right)=w_0^{(q)} + \sum\limits_{n=1}^{\infty}\left( w_n^{(q)} -\chi_{n,0}^{(q)} \right) e^{-ns_0}\cos(n\eta)
   \end{split}
   \end{equation*}
are utilized. Next, from Lemmas \ref{lemma3.4} and \ref{lemma3.5}, we obtain
   \begin{equation*}
   \begin{split}
       \|U^n\|_A^2 = \frac{1}{2\pi\mathrm{i}}\oint_{C_{e^{c/2}}} p^{-1} \Big( \widetilde{U}(p), \overline{\widetilde{U}(e^c/p)} \Big)_{\widetilde{\mathrm{H}}} dp,
   \end{split}
   \end{equation*}
in which $c>0$ and $C_{e^{c/2}}$ is a circle of radius $e^{\frac{c}{2}}$. Then, taking $p=e^{\frac{c}{2}+ \mathrm{i} \eta}$ to yield
   \begin{equation}\label{eq4.14}
   \begin{split}
        \|U^n\|_A^2 & = \frac{1}{2\pi} \int_{0}^{2\pi} \Big( \widetilde{U}(e^{\frac{c}{2}+ \mathrm{i} \eta}), \overline{\widetilde{U}(e^{\frac{c}{2}- \mathrm{i} \eta})} \Big)_{\widetilde{\mathrm{H}}} d\eta   =  \frac{1}{2\pi} \int_{0}^{2\pi} \left\|\widetilde{U}(e^{\frac{c}{2}+ \mathrm{i} \eta})\right\|^2_{\widetilde{\mathrm{H}}}  d\eta.
   \end{split}
   \end{equation}
In addition, we employ \eqref{eq4.2} and \eqref{eq4.3} to yield that
   \begin{equation*}
   \begin{split}
       \sum\limits_{n=1}^{\infty}\left| w_n^{(q)} -\chi_{n,0}^{(q)} \right| &\leq  \sum\limits_{n=1}^{\infty} \int_{-k}^{0}| \beta_q(t_n-\zeta)|\left|h(\zeta/k) \right| d\zeta \\
       & \leq \sum\limits_{n=1}^{\infty} \int_{t_n}^{t_{n+1}}\beta_q(s) ds \leq \|\beta_q\|_{\mathrm{L_1}(0,\infty)},
   \end{split}
   \end{equation*}
and that
   \begin{equation*}
   \begin{split}
       w_0^{(q)} &\leq   \int_{-k}^{0} \beta_q(-\zeta) \left|h(\zeta/k) \right| d\zeta
        \leq  \int_{0}^{k} \beta_q(s) ds \leq \|\beta_q\|_{\mathrm{L_1}(0,\infty)}.
   \end{split}
   \end{equation*}
Therefore, taking $s_0=\frac{c}{2}$, then \eqref{eq4.13} becomes
   \begin{equation*}
   \begin{split}
        \left\|\widetilde{U}\left(e^{\frac{c}{2}+ \mathrm{i} \eta}\right)\right\|_{\widetilde{\mathrm{H}}}
         & \leq  \frac{e^{\frac{c}{2}}(1+e^{\frac{c}{2}})^2}{2(e^{\frac{c}{2}}-1)^3} \Big( k\|f^0\| + 2\|U^0\| \Big) + \frac{k(1+e^{\frac{c}{2}})^2}{2(e^{c}-1)} \sum\limits_{n=0}^{\infty} e^{-\frac{c}{2} n} \|f^n\| \\
        & +   \sum\limits_{q=1}^{2} \frac{(1+e^{\frac{c}{2}})^2  \|\beta_q\|_{\mathrm{L_1}(0,\infty)}}{2e^{\frac{c}{2}}(e^{\frac{c}{2}}-1)} k\|\mathbf{B}_qU^0\|.
   \end{split}
   \end{equation*}
Further, it holds that
   \begin{equation}\label{eq4.15}
   \begin{split}
        \left\|\widetilde{U}\left(e^{\frac{c}{2}+ \mathrm{i} \eta}\right)\right\|^2_{\widetilde{\mathrm{H}}}
         & \leq  \frac{e^{c}(1+e^{\frac{c}{2}})^4}{(e^{\frac{c}{2}}-1)^6} \Big( k\|f^0\| + 2\|U^0\|  \Big)^2 \\
        & +  \sum\limits_{q=1}^{2} \frac{ (1+e^{\frac{c}{2}})^4\|\beta_q\|^2_{\mathrm{L_1}(0,\infty)}}{e^{c}(e^{\frac{c}{2}}-1)^2} k^2 \|\mathbf{B}_qU^0\|^2  \\
        & +  \frac{(1+e^{\frac{c}{2}})^4}{2(e^{c}-1)^2} k^2 \left(\sum\limits_{n=0}^{\infty} e^{-\frac{c}{2} n} \|f^n\|\right)^2.
   \end{split}
   \end{equation}
Using \eqref{eq4.14} and \eqref{eq4.15}, the proof is finished.
\end{proof}

\vskip 2mm
Then we shall consider the convergence of the scheme \eqref{eq4.8}, and the following theorem holds.
\begin{theorem}\label{theorem4.3} Let $U^n$ and $u(t_n)$ be the solution of \eqref{eq4.8} and \eqref{eq1.3}, respectively. Supposing that $\beta_q(t)\in \mathrm{L_1}(\mathbb{R}_+)$, $q=1,2$ and $c>0$, then for $n\geq 1$,
   \begin{equation*}
   \begin{split}
      \|U^n-u(t_n)\|_A
         & \leq  \mathcal{C}k  \Biggr\{  e^{-\frac{c}{2}} \int_0^{k} \left\|u_{tt}(\zeta)\right\| d\zeta + k \sum\limits_{n=2}^{\infty}e^{-\frac{c}{2}n} \int_{t_{n-1}}^{t_{n}} \left\|u_{ttt}(\zeta)\right\| d\zeta  \\
         & +  \sum\limits_{q=1}^{2}\|\beta_q\|_{\mathrm{L_1}(0,\infty)} \sum\limits_{n=1}^{\infty} e^{-\frac{c}{2}n} \left(\int_{0}^{k}\left\|\mathbf{B}_q u_{t}(\zeta)\right\| d\zeta + k \int_{k}^{t_n}\left\|\mathbf{B}_qu_{tt}(\zeta)\right\| d\zeta \right)   \Biggr\}.
   \end{split}
   \end{equation*}
\end{theorem}
\begin{proof}
  It is obvious that temporal error $\rho^n = U^n - u(t_n)$ satisfies
   \begin{equation*}
   \begin{split}
      \delta_t \rho^n  & + \sum\limits_{q=1}^{2}Q_{n-1/2}^{(q)}(\mathbf{B}_q\rho) = r_1^{n-\frac{1}{2}} + r_2^{n-\frac{1}{2}},   \quad   n\geq 1, \\
      & \rho^0=0,
   \end{split}
   \end{equation*}
from which,
   \begin{equation}\label{eq4.16}
   \begin{split}
          r_1^{n-\frac{1}{2}} = \left(\frac{\partial u}{\partial t}\right)^{n-\frac{1}{2}} - \left(\frac{\partial u}{\partial t}\right)(t_{n-\frac{1}{2}}) + \left(\frac{\partial u}{\partial t}\right)(t_{n-\frac{1}{2}}) - \delta_tu^n.
   \end{split}
   \end{equation}
By denoting the notation $u_{t}=\frac{\partial u}{\partial t}$, we further get
   \begin{equation}\label{eq4.17}
   \begin{split}
       \left\|r_1^{\frac{1}{2}}\right\| \leq \int_0^{k} \left\|u_{tt}(\zeta)\right\| d\zeta,
       \quad \left\|r_1^{n-\frac{1}{2}}\right\|\leq \frac{k}{2} \int_{t_{n-1}}^{t_{n}} \left\|u_{ttt}(\zeta)\right\| d\zeta,
       \quad n\geq 2.
   \end{split}
   \end{equation}
Furthermore, it follows from \eqref{eq4.5} that
   \begin{equation}\label{eq4.18}
   \begin{split}
         \left\|r_2^n \right\|  \leq \sum\limits_{q=1}^{2} \left(  2  \breve{\mu}_{n-1}^{(q)} \int_{0}^{k}\left\|\mathbf{B}_qu_{t}(\zeta)\right\|d\zeta + k\sum\limits_{j=2}^{n} \breve{\mu}_{n-j}^{(q)} \int_{t_{j-1}}^{t_{j}} \left\|\mathbf{B}_qu_{tt}(\zeta)\right\| d\zeta \right).
   \end{split}
   \end{equation}
Using \eqref{eq4.18} and the definition of $\breve{\mu}_{j}^{(q)}$, we yield the following estimate
   \begin{equation}\label{eq4.19}
   \begin{split}
       \sum\limits_{n=1}^{\infty} e^{-\frac{c}{2}n}\left\|r_2^n \right\| \leq  \sum\limits_{q=1}^{2}\|\beta_q\|_{\mathrm{L_1}(0,\infty)} \sum\limits_{n=1}^{\infty} e^{-\frac{c}{2}n} \left( 2\int_{0}^{k}\left\|\mathbf{B}_qu_{t}(\zeta)\right\| d\zeta + k \int_{k}^{t_n}\left\|\mathbf{B}_qu_{tt}(\zeta)\right\| d\zeta \right).
   \end{split}
   \end{equation}
Then, from Theorem \ref{theorem4.2}, we have
   \begin{equation}\label{eq4.20}
   \begin{split}
       \sum\limits_{n=1}^{\infty} e^{-cn}\|\rho^n\|^2
         & \leq  \frac{\mathcal{C}(1+e^{\frac{c}{2}})^4}{(e^{c}-1)^2} k^2 \left[\sum\limits_{n=1}^{\infty} e^{-\frac{c}{2}n} \left( \left\|r_1^{n-\frac{1}{2}}\right\| + \left\|r_2^{n-\frac{1}{2}}\right\| \right) \right]^2.
   \end{split}
   \end{equation}
Substituting \eqref{eq4.17} and \eqref{eq4.18} into \eqref{eq4.20}, we complete the proof.
\end{proof}

\section{BDF2 IQ-CQ method for kernels of Case II}\label{section5}
Herein, we consider a mixed case with the kernels of \textbf{Case II}, namely, $\beta_1(t)\in \mathrm{L_{1}}(0,\infty)$ and $\beta_2(t)=\frac{t^{\alpha-1}}{\Gamma(\alpha)} \in \mathrm{L_{1,loc}}(0,\infty)$.

Firstly, in Section \ref{section4}, we have introduce the IQ rule $Q_{n}^{(1)}(\varphi)$ to approximate the first integral term in \eqref{eq1.3} with $\beta_1(t)\in \mathrm{L_{1}}(0,\infty)$, which means that we only need to consider another method to approximate the second integral term in \eqref{eq1.3} with $\beta_2(t)=\frac{t^{\alpha-1}}{\Gamma(\alpha)}$. Naturally, a better option is the second-order CQ rule \cite{Lubich,Lubich1}. 

Denote
     \begin{equation}\label{eq5.1}
     \begin{split}
             \widetilde{Q}_{n}^{(q)}(\varphi) = \varpi_{n,0}^{(q)} \varphi^0 + k^{\alpha}\sum\limits_{p=1}^{n} \omega_{n-p}^{(q)} \varphi^{p}, \quad q=1,2,
     \end{split}
     \end{equation}
from which, when $q=1$, $\widetilde{Q}_{n}^{(1)}(\varphi)= Q_{n}^{(1)}(\varphi)$, which implies that $\varpi_{n,0}^{(1)}=\chi_{n,0}^{(1)}$ and $k^{\alpha}\omega_{n}^{(1)}=w_{n}^{(1)}$; when $q=2$, $\widetilde{Q}_{n}^{(2)}(\varphi)$ is the second-order CQ rule, thus the generating coefficients $\omega_{n}^{(2)}$ is obtained by
     \begin{equation}\label{eq5.2}
     \begin{split}
             \hat{\beta}_2\left( \frac{(3-\zeta)(1-\zeta)}{2} \right) = \left[ \frac{(3-\zeta)(1-\zeta)}{2} \right]^{-\alpha} = \sum\limits_{n=0}^{\infty} \omega_{n}^{(2)}\zeta^{n},
     \end{split}
     \end{equation}
then in order to approximate the integral formally to the second order, we give the starting quadrature weights $\varpi_{n,0}^{(2)}$, such that
     \begin{equation}\label{eq5.3}
     \begin{split}
             \varpi_{n,0}^{(2)}  = ( \beta_2 \ast 1 )(t_n) -  k^{\alpha}\sum\limits_{p=1}^{n} \omega_{p}^{(2)},  \quad n\geq 1.
     \end{split}
     \end{equation}

In addition, we utilize the Taylor expansion formula and \cite[(2.4) and (3.3)]{Lubich} to obtain
   \begin{equation}\label{eq5.4}
   \begin{split}
       \left\| (\beta_2 \ast \varphi)(t_n)  - \widetilde{Q}_{n}^{(2)}(\varphi) \right\|  \leq \mathcal{C} & \Biggr(  k^2t_n^{\alpha-1} \left\|\varphi_{t}(0)\right\|  + k^{\alpha+1}\int_{t_{n-1}}^{t_n} \left\|\varphi_{tt}(\zeta)\right\|d\zeta  \\
       &+ k^{2}\int_{0}^{t_{n-1}} \beta_2(t_n-\zeta) \left\|\varphi_{tt}(\zeta)\right\| d\zeta  \Biggr).
   \end{split}
   \end{equation}

Next, the BDF2 method and \eqref{eq5.1} are used to approximate \eqref{eq1.3}, then
   \begin{equation}\label{eq5.5}
   \begin{split}
      \delta_t U^1  & + \sum\limits_{q=1}^{2}\widetilde{Q}_{1}^{(q)}(\mathbf{B}_qU)   = f^{1},   \\
      \delta_t^{(2)} U^n  & + \sum\limits_{q=1}^{2} \widetilde{Q}_{n}^{(q)}(\mathbf{B}_qU)   = f^{n},   \quad   n\geq 2, \\
      & U^0=u_0,
   \end{split}
   \end{equation}
where $\delta_t v^n$ defined by \eqref{eq4.8} and $\delta_t^{(2)} v^n=\frac{3}{2}\delta_t v^n - \frac{1}{2}\delta_t v^{n-1}$ with $n\geq 2$.

\vskip 1mm
Based on Propositions \ref{proposition3.1} and \ref{proposition3.2}, using the z-transform for \eqref{eq5.5} with $n\geq 1$, i.e., multiplying \eqref{eq5.5} with $z^{-n}$ and then summing it for $n$ from $1$ to $\infty$, we have
   \begin{equation}\label{eq5.6}
   \begin{split}
        \left( \frac{3}{2} - 2z^{-1} + \frac{z^{-2}}{2} \right) \vec{U}(z) & +  k \sum\limits_{q=1}^{2} \vec{\omega}^{(q)}(z)\mathbf{B}_q\vec{U}(z)
        = k\vec{F}(z)  \\
       &- k\sum\limits_{q=1}^{2} \vec{\varpi}^{(q)}(z) \mathbf{B}_qU^0 + \frac{z^{-1}}{2}U^1 + \left( z^{-1} - \frac{z^{-2}}{2} \right)U^0,
   \end{split}
   \end{equation}
from which,
   \begin{equation}\label{eq5.7}
   \begin{split}
       & \vec{U}(z) = U^1 z^{-1} + U^2 z^{-2} + \cdots + U^n z^{-n} + \cdots, \\
       & \vec{F}(z) = f^1 z^{-1} + f^2 z^{-2} + \cdots + f^n z^{-n} + \cdots, \\
       & \vec{\omega}^{(1)}(z) = \widetilde{w}^{(1)}(z), \quad  \vec{\varpi}^{(1)}(z) = \chi_{1,0}^{(1)}z^{-1} +\chi_{2,0}^{(1)}z^{-2} + \cdots + \chi_{n,0}^{(1)} z^{-n} + \cdots, \\
       & \vec{\omega}^{(2)}(z) = \omega_0^{(2)} + \omega_1^{(2)} z^{-1} + \cdots + \omega_n^{(2)} z^{-n} + \cdots = \left[ \frac{(3-z^{-1})(1-z^{-1})}{2} \right]^{-\alpha},  \\
       & \vec{\varpi}^{(2)}(z) = \varpi_{1,0}^{(2)}z^{-1} + \varpi_{2,0}^{(2)} z^{-2} + \cdots + \varpi_{n,0}^{(2)} z^{-n} + \cdots.
   \end{split}
   \end{equation}

\vskip 0.2mm
Let $z=e^{s_0+\mathrm{i}\eta}$ with $s_0>0$. Thus, we have
   \begin{equation}\label{eq5.8}
   \begin{split}
        \Re \left( \frac{3}{2} - 2z^{-1} + \frac{z^{-2}}{2} \right) = \frac{1-e^{-2s_0}}{2} + (1-e^{-s_0}\cos\eta)^2 \geq \frac{1-e^{-2s_0}}{2},
   \end{split}
   \end{equation}
   \begin{equation}\label{eq5.9}
   \begin{split}
        \Big|\Re \left( \vec{\varpi}^{(1)}(z) \right) \Big|  & \leq \sum\limits_{n=1}^{\infty} \left|\chi_{n,0}^{(1)}\right| e^{-s_0n} \leq 2\sum\limits_{n=1}^{\infty}
        \int_{t_{n-1}}^{t_{n}} e^{-s_0n} \beta_1(s) ds \\
        & \leq 2e^{-s_0} \int_{0}^{\infty}  \beta_1(s)ds \leq 2e^{-s_0} \|\beta_1\|_{\mathrm{L_1}(0,\infty)},
   \end{split}
   \end{equation}
and from \cite{Lubich1} we have $\left|\varpi_{n,0}^{(2)}\right|=\mathcal{O}(kt_n^{\alpha-1})$, which implies that
   \begin{equation}\label{eq5.10}
   \begin{split}
        \Big|\Re \left( \vec{\varpi}^{(2)}(z) \right) \Big|  &  \leq \sum\limits_{n=1}^{\infty} \left|\varpi_{n,0}^{(2)}\right| e^{-s_0n} \leq \mathcal{C}k^{\alpha} \sum\limits_{n=1}^{\infty} n^{\alpha-1} e^{-s_0n} \leq \mathcal{C}_{\alpha,s_0} k^{\alpha}.
   \end{split}
   \end{equation}

\vskip 0.5mm
In addition, a key lemma is established as follows.
\begin{lemma}\label{lemma5.1} Let the series $\vec{\omega}^{(2)}(z)=\sum_{n=0}^{\infty}\omega_n^{(2)}z^{-n}$ be given in \eqref{eq5.7}. Then it holds that $\Re \left(\vec{\omega}^{(2)}(z)\right) \geq 0$.
\end{lemma}
\begin{proof} Taking $z=e^{s_0+\mathrm{i}\eta}$ with $s_0>0$ and using \eqref{eq5.8}, we have $\Re \left( \frac{3}{2} - 2z^{-1} + \frac{z^{-2}}{2} \right)>0$. Therefore, the desired result is yielded by Lemma \ref{lemma2.2}.
\end{proof}

\vskip 2mm
Then, we shall deduce the following stability result.
\begin{theorem}\label{theorem5.2} Let $U^n$ denoted by \eqref{eq5.5} be the numerical solution of \eqref{eq1.3}. Supposing that $\beta_1(t)\in \mathrm{L_1}(\mathbb{R}_+)$, $\beta_2(t)=\frac{t^{\alpha-1}}{\Gamma(\alpha)} \in \mathrm{L_{1,loc}}(\mathbb{R}_+)$ and $c>0$, then it holds that
   \begin{equation*}
   \begin{split}
       \|U^n\|_A
        & \leq \frac{4k}{1-e^{-c}}  \sum\limits_{n=1}^{\infty} e^{-\frac{c}{2}n } \|f^n\|
        +  \frac{4e^{-\frac{c}{2}}}{1-e^{-c}}k \|\beta_1\|_{\mathrm{L_1}(0,\infty)}\left\|\mathbf{B}_1U^0\right\|  \\
        & + \frac{2\mathcal{C}_{\alpha,c}}{1-e^{-c}} k^{\alpha+1}\left\|\mathbf{B}_2U^0\right\|  + \frac{1+3e^{-\frac{c}{2}}}{1-e^{-c}}\left\|U^0\right\|.
   \end{split}
   \end{equation*}
\end{theorem}
\begin{proof} First, by taking the inner product of \eqref{eq5.6} with $\vec{U}(z)$, we have
   \begin{equation}\label{eq5.11}
   \begin{split}
        \left( \frac{3}{2} - 2z^{-1} + \frac{z^{-2}}{2} \right)& \left\|\vec{U}(z)\right\|^2_{\widetilde{\mathrm{H}}}  +   k \sum\limits_{q=1}^{2} \vec{\omega}^{(q)}(z)\left( \mathbf{B}_q\vec{U}(z), \vec{U}(z)\right)\\
        &\leq  k\left(\vec{F}(z), \vec{U}(z)\right)
       - k\sum\limits_{q=1}^{2} \vec{\varpi}^{(q)}(z) \left(\mathbf{B}_qU^0,  \vec{U}(z)\right) \\
       & + \frac{z^{-1}}{2}\left(U^1, \vec{U}(z)\right) + \left( z^{-1} - \frac{z^{-2}}{2} \right)\left(U^0, \vec{U}(z)\right),
   \end{split}
   \end{equation}
in which, employing the positivity of the operator $\mathbf{B}_q$, Lemma \ref{lemma4.1} and Lemma \ref{lemma5.1}, we obtain
   \begin{equation}\label{eq5.12}
   \begin{split}
        \sum\limits_{q=1}^{2} \Re\left(\vec{\omega}^{(q)}(z) \right) \left( \mathbf{B}_q\vec{U}(z), \vec{U}(z)\right) \geq 0.
   \end{split}
   \end{equation}
Then taking the real part of \eqref{eq5.11} and using \eqref{eq5.8} and \eqref{eq5.12}, we further get
   \begin{equation}\label{eq5.13}
   \begin{split}
        \frac{1-e^{-2s_0}}{2} \left\|\vec{U}(z)\right\|_{\widetilde{\mathrm{H}}} & \leq  k \sum\limits_{n=1}^{\infty} e^{-n s_0 } \|f^n\|
        + k\sum\limits_{q=1}^{2} \left|\Re\left(\vec{\varpi}^{(q)}(z)\right)\right|  \left\|\mathbf{B}_qU^0\right\| \\
        & + \frac{1}{2}e^{-s_0} \left\|U^1\right\| + \left(e^{-s_0}+\frac{1}{2}e^{-2s_0}\right)\left\|U^0\right\|.
   \end{split}
   \end{equation}
Noting that $\vec{U}(z)=\widetilde{U}(z)-U^0$ and using \eqref{eq5.9}-\eqref{eq5.10}, then \eqref{eq5.13} is simplified to
   \begin{equation}\label{eq5.14}
   \begin{split}
       \left\|\widetilde{U}(z)\right\|_{\widetilde{\mathrm{H}}} & \leq \frac{2k}{1-e^{-2s_0}}  \sum\limits_{n=1}^{\infty} e^{-n s_0 } \|f^n\|
        +  \frac{4e^{-s_0}}{1-e^{-2s_0}}k \|\beta_1\|_{\mathrm{L_1}(0,\infty)}\left\|\mathbf{B}_1U^0\right\|  \\
        & + \frac{2\mathcal{C}_{\alpha,s_0}}{1-e^{-2s_0}} k^{\alpha+1}\left\|\mathbf{B}_2U^0\right\|  + \frac{1+2e^{-s_0}}{1-e^{-2s_0}}\left\|U^0\right\| + \frac{e^{-s_0} }{1-e^{-2s_0}}\left\|U^1\right\|,
   \end{split}
   \end{equation}
from which, it follows from \eqref{eq5.5} that
   \begin{equation}\label{eq5.15}
   \begin{split}
       \left\|U^1\right\| \leq  \left\|U^0\right\| + 2k\left\|f^1\right\|.
   \end{split}
   \end{equation}
Then taking $s_0=\frac{c}{2}$, \eqref{eq5.14} and \eqref{eq5.15} lead to
   \begin{equation*}
   \begin{split}
        \left\|\widetilde{U}\left(e^{\frac{c}{2}+ \mathrm{i} \eta}\right)\right\|_{\widetilde{\mathrm{H}}}
         & \leq \frac{4k}{1-e^{-c}}  \sum\limits_{n=1}^{\infty} e^{-\frac{c}{2}n } \|f^n\|
        +  \frac{4e^{-\frac{c}{2}}}{1-e^{-c}}k \|\beta_1\|_{\mathrm{L_1}(0,\infty)}\left\|\mathbf{B}_1U^0\right\|  \\
        & + \frac{2\mathcal{C}_{\alpha,c}}{1-e^{-c}} k^{\alpha+1}\left\|\mathbf{B}_2U^0\right\|  + \frac{1+3e^{-\frac{c}{2}}}{1-e^{-c}}\left\|U^0\right\|.
   \end{split}
   \end{equation*}
Then combining above formula and \eqref{eq4.14}, we complete the proof.
\end{proof}

\vskip 2mm
Then the convergence of the scheme \eqref{eq5.5} will be deduced as follows.
\begin{theorem}\label{theorem5.3} Let $U^n$ and $u(t_n)$ be the solution of \eqref{eq5.5} and \eqref{eq1.3}, respectively. Assuming that $\beta_1(t)\in \mathrm{L_1}(\mathbb{R}_+)$, $\beta_2(t)=\frac{t^{\alpha-1}}{\Gamma(\alpha)} \in \mathrm{L_{1,loc}}(\mathbb{R}_+)$ and $c>0$, then for $n\geq 1$, it holds that
   \begin{equation*}
   \begin{split}
      \|U^n-u(t_n)\|_A
         & \leq  \frac{4k}{1-e^{-c}} \Biggr[   \frac{8}{c} \int_{0}^{2k} \|u_{tt}(\zeta)\|d\zeta + k\sum\limits_{n=2}^{\infty} e^{-\frac{c}{2}n }\int_{t_{n-1}}^{t_{n}} \|u_{ttt}(\zeta)\|d\zeta \\
         & + \|\beta_1\|_{\mathrm{L_1}(0,\infty)} \sum\limits_{n=1}^{\infty} e^{-\frac{c}{2}n}  \Biggr( 2\int_{0}^{k}\left\|\mathbf{B}_1u_{t}(\zeta)\right\| d\zeta +  k \int_{k}^{t_n}\left\|\mathbf{B}_1u_{tt}(\zeta)\right\| d\zeta \Biggr) \\
         & \qquad +  \sum\limits_{n=1}^{\infty} e^{-\frac{c}{2}n}   \Biggr(  k^2t_n^{\alpha-1} \left\|\mathbf{B}_2u_{t}(0)\right\| + k^{\alpha+1}\int_{t_{n-1}}^{t_n} \left\|\mathbf{B}_2u_{tt}(\zeta)\right\| d\zeta  \\
        & \qquad\qquad\qquad + k^{2}\int_{0}^{t_{n-1}} (t_n-\zeta)^{\alpha-1} \left\|\mathbf{B}_2u_{tt}(\zeta)\right\| d\zeta  \Biggr)
        \Biggr].
   \end{split}
   \end{equation*}
\end{theorem}
\begin{proof}
  At first, the time error $\rho^n = U^n - u(t_n)$ satisfies
   \begin{equation}\label{eq5.16}
   \begin{split}
     \delta_t \rho^1  & + \sum\limits_{q=1}^{2}\widetilde{Q}_{1}^{(q)}(\mathbf{B}_q\rho)   = r_3^{1} + r_4^1 + r_5^1,   \\
     \delta_t^{(2)} \rho^n  & + \sum\limits_{q=1}^{2} \widetilde{Q}_{n}^{(q)}(\mathbf{B}_q\rho) = r_3^{n} + r_4^{n} + r_5^{n},   \quad   n\geq 2, \\
      & \rho^0=0,
   \end{split}
   \end{equation}
where the notations $r_3^{n}$, $n\geq 1$ are the truncation errors of first-order time derivative, and the following estimates hold
   \begin{equation}\label{eq5.17}
   \begin{split}
         & \|r_3^{1}\| \leq  2 \int_{0}^{k} \|u_{tt}(\zeta)\|d\zeta, \\
         & \|r_3^{n}\| \leq 4 \left( \int_{0}^{2k} \|u_{tt}(\zeta)\|d\zeta + k\int_{t_{n-1}}^{t_{n}} \|u_{ttt}(\zeta)\|d\zeta  \right),  \quad  n\geq 2.
   \end{split}
   \end{equation}
Furthermore, $r_4^{n}$ and $r_5^{n}$ are the quadrature errors of two integrals in \eqref{eq1.3}, which are estimated by \eqref{eq4.5} and \eqref{eq5.4}, respectively. Next, regarding \eqref{eq5.16}, similar to the analysis of Theorem \ref{theorem5.2}, we have
   \begin{equation}\label{eq5.18}
   \begin{split}
       \|\rho^n\|_A
        & \leq \frac{4k}{1-e^{-c}}  \sum\limits_{n=1}^{\infty} e^{-\frac{c}{2}n } \big(\|r_3^n\| +\|r_4^n\| +\|r_5^n\| \big),
   \end{split}
   \end{equation}
from which, \eqref{eq5.7} leads to
   \begin{equation}\label{eq5.19}
   \begin{split}
       \sum\limits_{n=1}^{\infty} e^{-\frac{c}{2}n } \|r_3^n\| \leq    \frac{8}{c} \int_{0}^{2k} \|u_{tt}(\zeta)\|d\zeta + k\sum\limits_{n=2}^{\infty} e^{-\frac{c}{2}n }\int_{t_{n-1}}^{t_{n}} \|u_{ttt}(\zeta)\|d\zeta,
   \end{split}
   \end{equation}
and using \eqref{eq4.19}, we obtain
   \begin{equation}\label{eq5.20}
   \begin{split}
       \sum\limits_{n=1}^{\infty} e^{-\frac{c}{2}n}\left\|r_4^n \right\| \leq  \|\beta_1\|_{\mathrm{L_1}(0,\infty)} \sum\limits_{n=1}^{\infty} e^{-\frac{c}{2}n}  \Biggr( 2\int_{0}^{k}\left\|\mathbf{B}_1u_{t}(\zeta)\right\| d\zeta +  k \int_{k}^{t_n}\left\|\mathbf{B}_1u_{tt}(\zeta)\right\| d\zeta \Biggr).
   \end{split}
   \end{equation}
Then employing \eqref{eq5.4}, we have
   \begin{equation}\label{eq5.21}
   \begin{split}
     \sum\limits_{n=1}^{\infty} e^{-\frac{c}{2}n}\left\|r_5^n \right\| \leq  \sum\limits_{n=1}^{\infty} e^{-\frac{c}{2}n}  & \Biggr(  k^2t_n^{\alpha-1} \left\|\mathbf{B}_2u_{t}(0)\right\| + k^{\alpha+1}\int_{t_{n-1}}^{t_n} \left\|\mathbf{B}_2u_{tt}(\zeta)\right\| d\zeta  \\
       &+ k^{2}\int_{0}^{t_{n-1}} (t_n-\zeta)^{\alpha-1} \left\|\mathbf{B}_2u_{tt}(\zeta)\right\| d\zeta  \Biggr).
   \end{split}
   \end{equation}
By substituting \eqref{eq5.19}-\eqref{eq5.21} into \eqref{eq5.18}, the proof is finished.
\end{proof}

\section{Crank-Nicolson TCQ method for kernels of Case III}\label{section6}

In this section, we consider the kernels of \textbf{Case III}, that is $\beta_q(t)=\frac{t^{\alpha_q-1}}{\Gamma(\alpha_q)} \in \mathrm{L_{1,loc}}(0,\infty)$, $q=1,2$.

Firstly, we apply the trapezoidal convolution quadrature rule; see \cite{Cuesta1,Lubich}, i.e.,
     \begin{equation}\label{eq6.1}
     \begin{split}
             \mathcal{Q}_{n}^{(q)}(\varphi) = \kappa_{n,0}^{(q)} \varphi^0 + k^{\alpha_q}\sum\limits_{p=0}^{n} \mu_{n-p}^{(q)} \varphi^{p}, \quad q=1,2,
     \end{split}
     \end{equation}
from which, the generating coefficients $\mu_{n}^{(q)}$ of the power series are given by
     \begin{equation}\label{eq6.2}
     \begin{split}
             \hat{\beta_q}\left( \frac{2(1-z)}{1+z} \right) = \left( \frac{2(1-z)}{1+z} \right)^{-\alpha_q} = \sum\limits_{m=0}^{\infty} \mu_{m}^{(q)}z^{m},
     \end{split}
     \end{equation}
and for approximating the integral term formally to order 2, we introduce the starting quadrature weights $\kappa_{n,0}^{(q)}$, satisfying
     \begin{equation}\label{eq6.3}
     \begin{split}
             \kappa_{n,0}^{(q)}  = ( \beta_q \ast 1 )(t_n) -  k^{\alpha_q}\sum\limits_{p=0}^{n} \mu_{n}^{(q)},  \quad n\geq 0.
     \end{split}
     \end{equation}
Then, denoting $\widetilde{\kappa}_n^{(q)}=\frac{1}{2}\left(\kappa_{n,0}^{(q)}+\kappa_{n-1,0}^{(q)} + k^{\alpha_q}\mu_n^{(q)} \right)$ for $n\geq 1$, we have
     \begin{equation}\label{eq6.4}
     \begin{split}
             \mathcal{Q}_{n-\frac{1}{2}}^{(q)}(\varphi) = \widetilde{\kappa}_n^{(q)} \varphi^0 + k^{\alpha_q}\sum\limits_{p=1}^{n} \mu_{n-p}^{(q)} \varphi^{p-\frac{1}{2}}.
     \end{split}
     \end{equation}

Below, the Crank-Nicolson method and \eqref{eq6.4} are applied to discretize \eqref{eq1.3}, then
   \begin{equation}\label{eq6.5}
   \begin{split}
      \delta_t U^n  & + \sum\limits_{q=1}^{2}\mathcal{Q}_{n-1/2}^{(q)}(\mathbf{B}_qU)   = f^{n-1/2}, \quad   n\geq 1,  \\
      & U^0=u_0.
   \end{split}
   \end{equation}

\vskip 0.2mm
Next, we apply the z-transform to second and third terms of left side of \eqref{eq6.5}, then
     \begin{equation}\label{eq6.6}
     \begin{split}
            \sum\limits_{n=1}^{\infty} \mathcal{Q}_{n-\frac{1}{2}}^{(q)}(\mathbf{B}_qU) z^{-n}& = \frac{(1+z^{-1})k^{\alpha_q}}{2} \widetilde{\mu}^{(q)}(z) \mathbf{B}_q\widetilde{U}(z) \\
            & - \left(\frac{k^{\alpha_q} }{2} \widetilde{\mu}^{(q)}(z) - \widetilde{\kappa}^{(q)}(z) \right) \mathbf{B}_qU^0, \quad q=1,2,
     \end{split}
     \end{equation}
where $\widetilde{U}(z)$ defined by \eqref{eq4.11}, and
   \begin{equation}\label{eq6.7}
   \begin{split}
         & \widetilde{\mu}^{(q)}(z) = \mu_0^{(q)} + \mu_1^{(q)} z^{-1} + \cdots + \mu_n^{(q)} z^{-n} + \cdots = \left( \frac{2(1-z^{-1})}{1+z^{-1}} \right)^{-\alpha_q},   \\
         & \widetilde{\kappa}^{(q)}(z) = \widetilde{\kappa}_1^{(q)}z^{-1} + \widetilde{\kappa}_2^{(q)} z^{-2} + \cdots + \widetilde{\kappa}_n^{(q)} z^{-n} + \cdots, \qquad q=1,2.
   \end{split}
   \end{equation}
Then, by calculation and simplification, we get
   \begin{equation}\label{eq6.8}
   \begin{split}
         \Theta^{(q)}(z) \triangleq \frac{2}{1+z^{-1}} \left( \frac{k^{\alpha_q}}{2}\widetilde{\mu}^{(q)}(z) - \widetilde{ \kappa}^{(q)}(z) \right) = \frac{ k^{\alpha_q}\mu_0^{(q)}- \kappa_{0,0}^{(q)}}{1+z^{-1} } - \kappa^{(q)}(z),
   \end{split}
   \end{equation}
in which $\kappa^{(q)}(z) = \kappa_{1,0}^{(q)}z^{-1} + \kappa_{2,0}^{(q)} z^{-2} + \cdots + \kappa_{n,0}^{(q)} z^{-n} + \cdots$. This naturally yields
   \begin{equation}\label{eq6.9}
   \begin{split}
      \Re \left(\Theta^{(q)}(z)\right)= \left(\frac{k}{2}\right)^{\alpha_q} \Re \left(\frac{1}{1+z^{-1}} \right) - \sum\limits_{n=1}^{\infty}\kappa_{n,0}^{(q)} e^{-ns_0}\cos(n\eta).
   \end{split}
   \end{equation}
Besides, using the estimate of $\kappa_{n,0}^{(q)}$ (see \cite[Theorem 3]{Cuesta1}), we have
   \begin{equation}\label{eq6.10}
   \begin{split}
      \left|- \sum\limits_{n=1}^{\infty}\kappa_{n,0}^{(q)} e^{-ns_0}\cos(n\eta)\right| &\leq   \mathcal{C}k^{\alpha_q}\sum\limits_{n=1}^{\infty}e^{-s_0n} n^{\alpha_q-1} \\
      & \leq \mathcal{C}_{\alpha_q}' k^{\alpha_q} \int_{0}^{\infty} e^{-s_0 \theta}d\theta  \leq \mathcal{C}_{\alpha_q, s_0}' k^{\alpha_q}.
   \end{split}
   \end{equation}

\vskip 0.5mm
Furthermore, we give a key lemma as follows.
\begin{lemma}\label{lemma6.1} Let the series $\widetilde{\mu}^{(q)}(z)=\sum\limits_{n=0}^{\infty}\mu_n^{(q)} z^{-n}$ be denoted in \eqref{eq6.7}. Then it holds that $\Re \left(\widetilde{\mu}^{(q)}(z)\right) \geq 0$, $q=1,2$.
\end{lemma}
\begin{proof} Let $s_0>0$ and $z=e^{s_0+\mathrm{i}\eta}$, then it holds that
$ \Re \left(\frac{2(1-z^{-1})}{1+z^{-1}} \right) \geq \frac{2(e^{2s_0}-1)}{(1+e^{s_0})^2} >0$, which follows from Lemma \ref{lemma2.2} that $\Re \left(\widetilde{\mu}^{(q)}(z)\right) \geq 0$.
\end{proof}

\vskip 2mm
Based on above analyses, we obtain the following stability result.
\begin{theorem}\label{theorem6.2} Let $U^n$ be denoted by \eqref{eq6.5} be the numerical solution of \eqref{eq1.3}. Supposing $\beta_q(t)=\frac{t^{\alpha_q-1}}{\Gamma(\alpha_q)}$, $q=1,2$ and $c>0$, it holds that
   \begin{equation*}
   \begin{split}
       \|U^n\|_A
         & \leq  \frac{e^{\frac{c}{2}}(1+e^{\frac{c}{2}})^2}{2(e^{\frac{c}{2}}-1)^3} \Big( k\|f^0\| + 2\|U^0\| \Big) \\
        & +   \sum\limits_{q=1}^{2} k^{1+\alpha_q} \mathcal{C}_{\alpha_q, c}'\|\mathbf{B}_qU^0\|  + \frac{(1+e^{\frac{c}{2}})^2}{2(e^{c}-1)} k\sum\limits_{n=0}^{\infty} e^{-\frac{c}{2} n} \|f^n\|.
   \end{split}
   \end{equation*}
\end{theorem}
\begin{proof}
  First, multiplying \eqref{eq6.5} by $z^{-n}$, and then summing for $n$ from 1 to $\infty$, we obtain
   \begin{equation}\label{eq6.11}
   \begin{split}
         \frac{2(1-z^{-1})}{1+z^{-1}} \widetilde{U}(z) & +  \sum\limits_{q=1}^{2}k^{1+\alpha_q}\widetilde{\mu}^{(q)}(z) \mathbf{B}_q\widetilde{U}(z)
       = k\widetilde{F}(z) - \frac{k}{1+z^{-1}} f^0 \\
       & + \frac{2}{1+z^{-1}} U^0  + k \sum\limits_{q=1}^{2} \Theta^{(q)}(z) \mathbf{B}_qU^0,
   \end{split}
   \end{equation}
where Propositions \ref{proposition3.1}-\ref{proposition3.2} and \eqref{eq6.6}-\eqref{eq6.8} are utilized, and $\widetilde{F}(z)$ defined by \eqref{eq4.11}. Further, taking the inner product of \eqref{eq6.11} with $\widetilde{U}(z)$, it holds that
   \begin{equation}\label{eq6.12}
   \begin{split}
         \frac{2(1-z^{-1})}{1+z^{-1}} &\left\|\widetilde{U}(z)\right\|^2_{\widetilde{\mathrm{H}}}  +  \sum\limits_{q=1}^{2}k^{1+\alpha_q}\widetilde{\mu}^{(q)}(z) \left(\mathbf{B}_q \widetilde{U}(z), \widetilde{U}(z)\right)
       \\
        & = k\left(\widetilde{F}(z),\widetilde{U}(z)\right)  - \frac{k}{1+z^{-1}} \left(f^0,\widetilde{U}(z)\right)   + \frac{2}{1+z^{-1}} \left(U^0,\widetilde{U}(z)\right) \\
        &   + k \sum\limits_{q=1}^{2} \Theta^{(q)}(z) \left(\mathbf{B}_qU^0,\widetilde{U}(z)\right),
   \end{split}
   \end{equation}
and then taking the real part of \eqref{eq6.12}, we employ \eqref{eq4.12}, \eqref{eq6.9}, \eqref{eq6.10} and Lemma \ref{lemma6.1} to yield
   \begin{equation}\label{eq6.13}
   \begin{split}
        \frac{2(e^{2s_0}-1)}{(1+e^{s_0})^2}  \left\|\widetilde{U}(e^{s_0+\mathrm{i} \eta})\right\|_{\widetilde{\mathrm{H}}}
         & \leq  \frac{1+e^{-s_0}}{(1-e^{-s_0})^2} \Big( k\|f^0\| + 2\|U^0\|  \Big) + k \sum\limits_{n=0}^{\infty} e^{-s_0 n} \|f^n\| \\
        & +    \sum\limits_{q=1}^{2}k^{1+\alpha_q} \left( \mathcal{C}_{\alpha_q, s_0}' +  \frac{1+e^{-s_0}}{2^{\alpha_q}(1-e^{-s_0})^2}   \right) \|\mathbf{B}_qU^0\|.
   \end{split}
   \end{equation}
Then choosing $s_0=\frac{c}{2}$, we yield
   \begin{equation}\label{eq6.14}
   \begin{split}
        \left\|\widetilde{U}\left(e^{\frac{c}{2}+ \mathrm{i} \eta}\right)\right\|_{\widetilde{\mathrm{H}}}
         & \leq  \frac{e^{\frac{c}{2}}(1+e^{\frac{c}{2}})^2}{2(e^{\frac{c}{2}}-1)^3} \Big( k\|f^0\| + 2\|U^0\| \Big)  + \frac{(1+e^{\frac{c}{2}})^2}{2(e^{c}-1)} k\sum\limits_{n=0}^{\infty} e^{-\frac{c}{2} n} \|f^n\|\\
        & +   \sum\limits_{q=1}^{2} k^{1+\alpha_q} \mathcal{C}_{\alpha_q, c}'\|\mathbf{B}_qU^0\|,
   \end{split}
   \end{equation}
where $\mathcal{C}_{\alpha_q, c}'= \frac{(1+e^{\frac{c}{2}})^2}{2(e^{c}-1)}\left( \mathcal{C}_{\alpha_q, \frac{c}{2}}' +  \frac{1+e^{-\frac{c}{2}}}{2^{\alpha_q}(1-e^{-\frac{c}{2}})^2}   \right)$. Then using \eqref{eq4.14} and \eqref{eq6.14}, we obtain the desired result.
\end{proof}

\vskip 2mm
Then we will consider the convergence of the scheme \eqref{eq6.5}, namely the following theorem.
\begin{theorem}\label{theorem6.3} Let $U^n$ and $u(t_n)$ be the solution of \eqref{eq6.5} and \eqref{eq1.3}, respectively. Assuming that $\beta_q(t)=\frac{t^{\alpha_q-1}}{\Gamma(\alpha_q)}$, $q=1,2$ and $c>0$, then for $n\geq 1$,
   \begin{equation*}
   \begin{split}
      \|U^n-u(t_n)\|_A
         & \leq  \mathcal{C}k  \Biggr\{  e^{-\frac{c}{2}} \int_0^{k} \left\|u_{tt}(\zeta)\right\| d\zeta + k \sum\limits_{n=2}^{\infty}e^{-\frac{c}{2}n} \int_{t_{n-1}}^{t_{n}} \left\|u_{ttt}(\zeta)\right\| d\zeta  \\
         & +  \sum\limits_{q=1}^{2}  \sum\limits_{n=1}^{\infty} e^{-\frac{c}{2}n}   \Biggr(  k^2t_n^{\alpha_q-1} \left\|\mathbf{B}_qu_{t}(0)\right\| + k^{\alpha_q+1}\int_{t_{n-1}}^{t_n} \left\|\mathbf{B}_qu_{tt}(\zeta)\right\| d\zeta  \\
        & \qquad\qquad + k^{2}\int_{0}^{t_{n-1}} (t_n-\zeta)^{\alpha_q-1} \left\|\mathbf{B}_qu_{tt}(\zeta)\right\| d\zeta  \Biggr) \Biggr\}.
   \end{split}
   \end{equation*}
\end{theorem}
\begin{proof}  First, by \eqref{eq6.5} we get the error equations as follows
\begin{equation}\label{eq6.15}
   \begin{split}
      \delta_t \rho^n  & + \sum\limits_{q=1}^{2}\mathcal{Q}_{n-1/2}^{(q)}(\mathbf{B}_q\rho) = R_1^{n-\frac{1}{2}} + R_2^{n-\frac{1}{2}},   \quad   n\geq 1, \\
      & \rho^0=0,
   \end{split}
   \end{equation}
where $R_1^{n-\frac{1}{2}}:=r_1^{n-\frac{1}{2}}$ is given and estimated by \eqref{eq4.16} and \eqref{eq4.17}, respectively, and
   \begin{equation}\label{eq6.16}
   \begin{split}
       R_2^{n-\frac{1}{2}} = \sum\limits_{q=1}^{2}\left[ (\beta_q \ast \mathbf{B}_qu)(t_n)  - \mathcal{Q}_{n}^{(q)}(\mathbf{B}_qu) \right],
   \end{split}
   \end{equation}
from which, we utilize the quadrature result of \cite[Theorem 2]{Cuesta1} to get
   \begin{equation}\label{eq6.17}
   \begin{split}
       \left\| (\beta_q \ast \varphi)(t_n)  - \mathcal{Q}_{n}^{(q)}(\varphi) \right\|  \leq \mathcal{C} & \Biggr(  k^2t_n^{\alpha_q-1} \left\|\varphi_{t}(0)\right\|  + k^{\alpha_q+1}\int_{t_{n-1}}^{t_n} \left\|\varphi_{tt}(\zeta)\right\|d\zeta  \\
       & \qquad + k^{2}\int_{0}^{t_{n-1}} \beta_q(t_n-\zeta) \left\|\varphi_{tt}(\zeta)\right\| d\zeta  \Biggr).
   \end{split}
   \end{equation}
Next, in view of Theorem \ref{theorem6.2} and \eqref{eq6.15}, we have
   \begin{equation}\label{eq6.18}
   \begin{split}
       \|\rho^n\|_A
         & \leq  \mathcal{C} k\sum\limits_{n=1}^{\infty} e^{-\frac{c}{2} n} \left( \left\|R_1^{n-\frac{1}{2}}  \right\| + \left\|R_2^{n-\frac{1}{2}} \right\| \right).
   \end{split}
   \end{equation}
By substituting \eqref{eq4.17}, \eqref{eq6.16} and \eqref{eq6.17} into \eqref{eq6.18}, the proof is completed.
\end{proof}

\vskip 2mm
Note that if the regularity of the solution of \eqref{eq1.3} can be given, then Theorem \ref{theorem6.3} will show a more concrete form, rather than an abstract representation. In fact, similar to that of \cite[Section 2, (2.1)]{McLean2}, when $u_0$ is relatively smooth, we assume that the solution of homogeneous case of \eqref{eq1.3} satisfies
   \begin{equation}\label{eq6.19}
   \begin{split}
          t^2\left\|u_{tt}(t)\right\| + t^3\left\|u_{ttt}(t)\right\| +  t\left\|\mathbf{B}_qu_t(t)\right\| + t^2 \left\|\mathbf{B}_qu_{tt}(t)\right\| \leq  \mathcal{C} t^{1+ \alpha_{*}}, \quad t\rightarrow 0^+,
   \end{split}
   \end{equation}
where $q=1,2$, and $\alpha_{*}=\min\left( \alpha_1,\alpha_2\right)$. Thence, we obtain the following theorem.

\begin{theorem}\label{theorem6.4} Let $U^n$ and $u(t_n)$ be the solution of \eqref{eq6.5} and \eqref{eq1.3}, respectively. Supposing that $\beta_q(t)=\frac{t^{\alpha_q-1}}{\Gamma(\alpha_q)}$, $q=1,2$, $\alpha_{*}=\min\left( \alpha_1,\alpha_2\right)$ and appropriate $c>0$, and satisfying the regularity assumption \eqref{eq6.19}, then it holds that
   \begin{equation*}
   \begin{split}
      \|U^n-u(t_n)\|_A  & \leq  \mathcal{C}k^{\alpha_*+1}.
   \end{split}
   \end{equation*}
\end{theorem}
\begin{proof} Using the assumption \eqref{eq6.19}, it is easy to yield
   \begin{equation}\label{eq6.20}
   \begin{split}
        &  k^2 \sum\limits_{n=2}^{\infty}e^{-\frac{c}{2}n} \int_{t_{n-1}}^{t_{n}} \left\|u_{ttt}(\zeta)\right\| d\zeta
        \leq \mathcal{C}  k^{1+\alpha_*} \sum\limits_{n=2}^{\infty}e^{-\frac{c}{2}n} n^{\alpha_*-1} \leq  \mathcal{C}k^{\alpha_*+1},
   \end{split}
   \end{equation}
and
   \begin{equation}\label{eq6.21}
   \begin{split}
        k^{3} \sum\limits_{n=1}^{\infty}e^{-\frac{c}{2}n}  & \int_{0}^{t_{n-1}} (t_n-\zeta)^{\alpha_q-1} \left\|\mathbf{B}_qu_{tt}(\zeta)\right\| d\zeta
        \\
        & \leq \mathcal{C}  k^{2+\alpha_*} \sum\limits_{n=1}^{\infty}e^{-\frac{c}{2}n} \int_{0}^{t_{n-1}} \zeta^{\alpha_* -1}d\zeta \leq  \mathcal{C}k^{\alpha_*+2}.
   \end{split}
   \end{equation}
The proof is finished by Theorem \ref{theorem6.3} and \eqref{eq6.20}-\eqref{eq6.21}.
\end{proof}

\section{Numerical experiment}\label{section7} In this section, we shall give some numerical examples to verify the theoretical analysis of Section \ref{section6}, involving the nonsmooth kernels of \textbf{Case III}, i.e., Abel kernels $\beta_q=\frac{t^{\alpha_q-1}}{\Gamma(\alpha_q)}$, $q=1,2$.

Below, we define the error and temporal convergence order with large $T$ (theoretically $T\rightarrow \infty$) and suitable $c>0$ that
   \begin{equation*}
   \begin{split}
                  Error(k,h) = \left( c \sum\limits_{n=1}^{\lfloor T/k \rfloor} e^{-c n} \|U^n-u(t_n)\|^2 \right)^{\frac{1}{2}}, \quad  Rate^t = \log_2\left( \frac{Error(2k,h)}{Error(k,h)}\right),
   \end{split}
   \end{equation*}
where the time-space step sizes $k=\frac{T}{N}$ and $h=\frac{1}{\mathcal{M}}$ with $\mathcal{M}, N\in \mathbb{Z}_+$.

\subsection{Application I}\label{subsection7.1} Concretely, from \cite{Hannsgen1}, we let $\mathbf{B}_1=-\frac{\partial^2}{\partial x^2}$ and $\mathbf{B}_2=\frac{\partial^4}{\partial x^4}$. Thus, with self-adjoint boundary and $\Omega=(0,1)$, problem \eqref{eq1.3} turns into
\begin{equation}\label{eq7.1}
	\begin{split}
           \frac{\partial u}{\partial t} & - \int_{0}^{t} \Big[\beta_1(t-s) u_{xx}(x,s) -  \beta_2(t-s) u_{xxxx}(x,s) \Big]ds  = f(x,t), \\
            & \qquad\qquad\qquad\qquad\qquad\qquad\qquad (x,t)\in \Omega\times (0,\infty),  \\
            &   u(x,0)  = u_0(x), \quad x\in \Omega \cup \partial\Omega, \\
            &   u(0,t)  = u(1,t) = u_{xx}(0,t)  = u_{xx}(1,t) = 0, \quad t \in (0,\infty).
    \end{split}
\end{equation}
Here, in order to test the long-time behavior of the scheme \eqref{eq6.5}, we approximate the terms $u_{xx}(x_j,t_n)$ and $u_{xxxx}(x_j,t_n)$ by the second-order difference method, that is
\begin{equation}\label{eq7.2}
	\begin{split}
             & \frac{\partial^2 u}{\partial x^2}(x_j,t_n) \approx \delta_x^2 U_j^n := \frac{U^n_{j+1}-2U^n_{j}+U^n_{j-1}}{h^2}, \quad   1\leq j \leq  \mathcal{M}-1, \quad   1\leq n \leq  N,  \\
             &  \frac{\partial^4 u}{\partial x^4}(x_j,t_n) \approx \delta_x^4U_j^n :=  \frac{U^n_{j+2}-4U^n_{j+1}+6U^n_{j}-4U^n_{j-1}+U^n_{j-2}}{h^4}.
    \end{split}
\end{equation}
Thence, we have the following fully discrete difference scheme
   \begin{equation}\label{eq7.3}
   \begin{split}
      \delta_t U^n_j  & - \mathcal{Q}_{n-\frac{1}{2}}^{(1)}(\delta_x^2U_j) + \mathcal{Q}_{n-\frac{1}{2}}^{(2)}(\delta_x^4U_j)   = f^{n-1/2}_j,
      \quad   n\geq 1, \quad   1\leq j \leq  \mathcal{M}-1,  \\
      & U_j^0=u_0(x_j), \quad 1\leq j \leq  \mathcal{M}-1, \\
      & U_{0}^n = U_{\mathcal{M}}^n = 0,  \quad  n\geq 1, \\
      & U_{-1}^n = -U_{1}^n, \quad U_{\mathcal{M}+1}^n = -U_{\mathcal{M}-1}^n,  \quad  n\geq 1.
   \end{split}
   \end{equation}

   \vskip 1mm
     \textbf{Example 1.} In this example, considering the problem \eqref{eq1.3} by the fully discrete scheme \eqref{eq7.3}. First, to satisfy the regularity \eqref{eq6.19} of the solution of \eqref{eq1.3}, we give the following exact solution
   \begin{equation}\label{eq7.4}
   \begin{split}
      u(x,t) = \sin \pi x - \frac{t^{\alpha_* +1}}{\Gamma(2+\alpha_*)} \sin 2\pi x,
   \end{split}
   \end{equation}
     then the initial condition $u_0(x)=\sin(\pi x)$ and the source term
   \begin{equation*}
   \begin{split}
      f(x,t) =& - \left(  \frac{t^{\alpha_*+1}}{\Gamma(\alpha_*+1)} + \frac{4\pi^2 t^{\alpha_*+\alpha_1+1}}{\Gamma(\alpha_*+\alpha_1+2)}  + \frac{16\pi^4 t^{\alpha_*+\alpha_2+1}}{\Gamma(\alpha_*+\alpha_2+2)}  \right)\sin 2\pi x \\
          &  + \left(  \frac{\pi^2 t^{\alpha_1}}{\Gamma(\alpha_1+1)} +  \frac{\pi^4 t^{\alpha_2}}{\Gamma(\alpha_2+1)}  \right)\sin \pi x.
   \end{split}
   \end{equation*}

     \vskip 0.2mm
     Table \ref{tb1} shows the errors and corresponding temporal convergence orders for fully discrete scheme \eqref{eq7.3}. Here, we test the long-time behavior of numerical solutions, i.e., the cases of large $T$. As we can see, when $T=100$ or $T=400$,
     the numerical results in Table \ref{tb1} are consistent with our theoretical estimate (see Theorem \ref{theorem6.4}). Concretely, the time convergence rates in Table \ref{tb1} approximate the order $k^{1+\alpha_*}$, which is in line with our expected results.

\begin{table}
  \center
    \footnotesize
  \caption{The errors and temporal convergence orders with different values of $\alpha_*$, $T$ and $\mathcal{M}=2048$.} \label{tb1}
  \begin{tabular}{cccccccc}
    \toprule
  & & \multicolumn{2}{c}{$T=100$, $c=1$}& & \multicolumn{2}{c}{$T=400$, $c=\frac{1}{4}$} \\
   \cmidrule(r){3-4}  \cmidrule(r){6-7}
   \noalign{\smallskip}
   $\alpha_*=\min(\alpha_1,\alpha_2)$ & $N$ & $Error(k,h)$ & $Rate^t$ & $N$ & $Error(k,h)$ & $Rate^t$ \\
    \midrule
                     & 8   &  5.1157e-01  &  -      & 16   & 9.6360e-01  & -       \\
     $\alpha_1=0.3$  & 16  &  2.0698e-01  &  1.305  & 32   & 3.9002e-01  & 1.305  \\
     $\alpha_2=0.7$  & 32  &  8.3697e-02  &  1.306  & 64   & 1.5775e-01  & 1.306  \\
                     & 64  &  3.3882e-02  &  1.305  & 128  & 6.3785e-02  & 1.306  \\
                     & 128 &  1.3820e-02  &  1.294  & 256  & 2.5864e-02  & 1.302  \\
    \midrule
                     & 8   &  1.5600e-01  &  -      & 16   & 3.6198e-01  & -       \\
     $\alpha_1=0.8$  & 16  &  7.0359e-02  &  1.149  & 32   & 1.5847e-01  & 1.192  \\
     $\alpha_2=0.2$  & 32  &  3.1263e-02  &  1.170  & 64   & 6.9235e-02  & 1.195  \\
                     & 64  &  1.3678e-02  &  1.193  & 128  & 3.0055e-02  & 1.204  \\
                     & 128 &  5.8410e-03  &  1.228  & 256  & 1.2871e-02  & 1.223  \\
    \midrule
                     & 8   &  8.7133e-01  &  -      & 16   & 1.9420e-00  & -       \\
     $\alpha_1=0.5$  & 16  &  3.0816e-01  &  1.499  & 32   & 6.8684e-01  & 1.499  \\
     $\alpha_2=0.5$  & 32  &  1.0901e-01  &  1.499  & 64   & 2.4295e-01  & 1.499  \\
                     & 64  &  3.8614e-02  &  1.497  & 128  & 8.5943e-02  & 1.499  \\
                     & 128 &  1.3721e-02  &  1.493  & 256  & 3.0454e-02  & 1.497   \\
    \bottomrule
  \end{tabular}
\end{table}

\subsection{Application II}\label{subsection7.2} Here, we consider another case. Denote the operators $\mathbf{B}_1=-\frac{1}{3}\frac{\partial^2}{\partial x^2}$ and $\mathbf{B}_2=-\frac{2}{3}\frac{\partial^2}{\partial x^2}$. Hence, with $\Omega=(0,1)$, problem \eqref{eq1.3} becomes
\begin{equation}\label{eq7.5}
	\begin{split}
           \frac{\partial u}{\partial t} & - \int_{0}^{t} \left[\frac{\beta_1(t-s)}{3}  +  \frac{2\beta_2(t-s)}{3}  \right] u_{xx}(x,s) ds  = f(x,t), \quad (x,t)\in \Omega\times (0,\infty),  \\
            &   u(x,0)  = u_0(x), \quad   x\in [0,1], \\
            &   u(0,t) = u(1,t) = 0, \quad  t \in(0,\infty).
    \end{split}
\end{equation}
Using \eqref{eq7.2} and the scheme \eqref{eq6.5}, we obtain the following fully discrete difference scheme
   \begin{equation}\label{eq7.6}
   \begin{split}
      \delta_t U^n_j  & - \frac{1}{3}\mathcal{Q}_{n-\frac{1}{2}}^{(1)}(\delta_x^2U_j) - \frac{2}{3}\mathcal{Q}_{n-\frac{1}{2}}^{(2)}(\delta_x^2U_j)   = f^{n-1/2}_j,
      \quad   n\geq 1, \quad   1\leq j \leq  \mathcal{M}-1,  \\
      & U_j^0=u_0(x_j), \quad 1\leq j \leq  \mathcal{M}-1, \\
      & U_{0}^n = U_{\mathcal{M}}^n = 0,  \quad  n\geq 1.
   \end{split}
   \end{equation}

   \vskip 1mm
     \textbf{Example 2.} Here, let the exact solution be given by \eqref{eq7.4}, thus the initial data $u_0(x)=\sin(\pi x)$ and the source term
   \begin{equation*}
   \begin{split}
      f(x,t) =& - \left(  \frac{t^{\alpha_*+1}}{\Gamma(\alpha_*+1)} + \frac{4\pi^2 t^{\alpha_*+\alpha_1+1}}{3\Gamma(\alpha_*+\alpha_1+2)}  + \frac{8\pi^2 t^{\alpha_*+\alpha_2+1}}{3\Gamma(\alpha_*+\alpha_2+2)}  \right)\sin 2\pi x \\
          &  + \left(  \frac{\pi^2 t^{\alpha_1}}{3\Gamma(\alpha_1+1)} +  \frac{2\pi^2 t^{\alpha_2}}{3\Gamma(\alpha_2+1)}  \right)\sin \pi x.
   \end{split}
   \end{equation*}

     \vskip 0.2mm
     Table \ref{tb2} lists the errors and corresponding time convergence rates for fully discrete difference scheme \eqref{eq7.6}. By testing different cases of $T=200$ and $T=500$, given suitable constant $c$ correspondingly, the results from Table \ref{tb2} illustrate that convergence rates in the time direction can reach $k^{1+\alpha_*}$, which is consistent with the theoretical finding.

\begin{table}
  \center
    \footnotesize
  \caption{The errors and temporal convergence orders with different values of $\alpha_*$, $T$ and $\mathcal{M}=1024$.} \label{tb2}
  \begin{tabular}{cccccccc}
    \toprule
  & & \multicolumn{2}{c}{$T=200$, $c=11$}& & \multicolumn{2}{c}{$T=500$, $c=14$} \\
   \cmidrule(r){3-4}  \cmidrule(r){6-7}
   \noalign{\smallskip}
   $\alpha_*=\min(\alpha_1,\alpha_2)$ & $N$ & $Error(k,h)$ & $Rate^t$ & $N$ & $Error(k,h)$ & $Rate^t$ \\
    \midrule
                     & 8   &  5.2929e-01  &  -      & 8    & 4.3881e-01  & -       \\
     $\alpha_1=0.3$  & 16  &  2.1487e-01  &  1.301  & 16   & 1.7810e-01  & 1.301  \\
     $\alpha_2=0.7$  & 32  &  8.7446e-02  &  1.297  & 32   & 7.2292e-02  & 1.301  \\
                     & 64  &  3.6154e-02  &  1.274  & 64   & 2.9381e-02  & 1.299  \\
                     & 128 &  1.6224e-02  &  1.156  & 128  & 1.2047e-02  & 1.286  \\
    \midrule
                     & 8   &  3.9401e-01  &  -      & 8    & 2.9178e-01  & -       \\
     $\alpha_1=0.8$  & 16  &  1.7343e-01  &  1.184  & 16   & 1.2908e-01  & 1.177  \\
     $\alpha_2=0.2$  & 32  &  7.6392e-02  &  1.183  & 32   & 5.6868e-02  & 1.183  \\
                     & 64  &  3.4240e-02  &  1.158  & 64   & 2.5022e-02  & 1.184  \\
                     & 128 &  1.6723e-02  &  1.034  & 128  & 1.1112e-02  & 1.171  \\
    \midrule
                     & 8   &  8.8039e-01  &  -      & 8    & 8.7593e-01  & -       \\
     $\alpha_1=0.5$  & 16  &  3.1135e-01  &  1.500  & 16   & 3.0969e-01  & 1.500  \\
     $\alpha_2=0.5$  & 32  &  1.1033e-01  &  1.497  & 32   & 1.0951e-01  & 1.500  \\
                     & 64  &  3.9704e-02  &  1.474  & 64   & 3.8761e-02  & 1.498  \\
                     & 128 &  1.5853e-02  &  1.325  & 128  & 1.3831e-02  & 1.487   \\
    \bottomrule
  \end{tabular}
\end{table}

\section{Concluding remarks}\label{section8}
In the present work, provided suitable hypotheses, the long-time global stability and convergence for time discretizations of VIDEs with multi-term non-smooth kernels were established by the z-transform and energy argument. However, in our analyses, the theoretical results were derived based on the decreasing exponentially weight norm. In the future work, we would like to prove the long-time global behavior of discrete scheme of problem \eqref{eq1.1} or  problem \eqref{eq1.3} while eliminating decreasing exponentially factors.

\section*{Conflict of Interest Statement}
The authors declare that they do not have any conflicts of interest.



\begin{thebibliography}{10}

      \bibitem {Cao} {\sc Y. Cao,  O. Nikan, Z. Avazzadeh}, A localized meshless technique for solving 2D nonlinear integro-differential equation with multi-term kernels, Appl. Numer. Math., 183 (2023), pp.~140--156.

    \bibitem {Carslaw} {\sc H. S. Carslaw, J. C. Jaeger}, {\em Conduction of Heat in Solids}, 2nd ed., Clarendon Press, Oxford, 1959.

    \bibitem {Cuesta1} {\sc E. Cuesta, C. Palencia}, {\em A fractional trapezoidal rule for integro-differential equations of fractional order in Banach spaces}, Appl. Numer. Math., 45 (2003), pp.~139--159.


    \bibitem {Hannsgen1} {\sc K. B. Hannsgen, R. L. Wheeler}, {\em Uniform $L^1$ Behavior in Classes of Integrodifferential Equations with Completely Monotonic Kernels}, SIAM J. Math. Anal., 15 (1984), pp.~579--594.

    \bibitem {Hu} {\sc S. Hu, W. Qiu, H. Chen},  {\em A backward Euler difference scheme for the integro-differential equations with the multi-term kernels}, Int. J. Comput. Math., 97 (2020), pp.~1254--1267.

    \bibitem {Jury} {\sc E. I. Jury}, {\em  Theory and application of the z-transform method}, John Wiey \& Sons, Inc., New York, 1964.


    \bibitem {Lopez-Marcos} {\sc J. C. L\'{o}pez-Marcos}, {\em  A difference scheme for a nonlinear partial integro-differential equation}, SIAM J. Numer. Anal., 27 (1990), pp.~20--31.

    \bibitem {Lubich}{\sc C. Lubich}, {\em  Discretized fractional calculus}, SIAM J. Math. Anal., 17 (1986), pp.~704--719.

    \bibitem {Lubich1}{\sc C. Lubich}, {\em  Convolution quadrature and discretized operational calculus. I}, Numer. Math., 52 (1988), pp.~129--145.

    \bibitem {MacCamy} {\sc R. C. MacCamy}, {\em  An integro-differential equation with application in heat flow}, Quart. Appl. Math., 35 (1977), pp.~1--19.

    \bibitem {McLean1} {\sc W. McLean, V. Thom\'{e}e}, {\em Numerical solution of an evolution equation with a positive-type memory term}, J. Austral Math. Soc. Ser. B, 35 (1993), pp.~23--70.

    \bibitem {McLean2} {\sc W. McLean, K. Mustapha}, {\em A second-order accurate numerical method for a fractional wave equation}, Numer. Math., 105 (2007), pp.~481--510.

    \bibitem {Nohel} {\sc J.A. Nohel, D.F. Shea}, {\em Frequency domain methods for Volterra equations}, Adv. Math., 22 (1976), pp.~278--304.

    \bibitem {Noren1} {\sc R. Noren}, {\em Uniform $L^1$ behavior in classes of integrodifferential equations with convex kernels}, J. Integr. Equ. Appl., 1 (1988), pp.~385--396.

    \bibitem {Noren2} {\sc R. Noren}, {\em Uniform $L^1$ behavior in a class of linear Volterra equations}, Quart. Appl. Math., 47 (1989), pp.~547--554.

    \bibitem {Pruss} {\sc J. Pr\"{u}ss}, {\em Evolutionary integral equations and applications}, Monographs Mathematics, Vol. 87, Birkh\"{a}user Verlag, Basel, 1993.

    \bibitem {Qiu1} {\sc  W. Qiu, D. Xu, H. Chen}, {\em  A formally second-order BDF finite difference scheme for the integro-differential equations with the multi-term kernels}, Int. J. Comput. Math., 97 (2020), pp.~2055--2073.

    \bibitem {Qiu2} {\sc  W. Qiu, D. Xu, J. Guo}, {\em  Numerical solution of the fourth-order partial integro-differential equation with multi-term kernels by the Sinc-collocation method based on the double exponential transformation}, Appl. Math. Comput., 392 (2021), p.~125693.

    \bibitem {Qiu3} {\sc  W. Qiu}, {\em  Optimal error estimate of accurate second-order scheme for Volterra integrodifferential equations with tempered multi-term kernels}, arXiv preprint arXiv:2209.00229, 2022.

   \bibitem {Renardy} {\sc M. Renardy, W. J. Hrusa, J. A. Nohel}, {\em Mathematical problem in viscoelasticity}, Longman, London, 1987.


   \bibitem {Xu1} {\sc D. Xu}, {\em The time discretization in classes of integro-differential equations with completely monotonic kernels: Weighted asymptotic stability}, Sci. China Math., 56 (2013), pp.~395--424.

   \bibitem {Xu2} {\sc D. Xu}, {\em The time discretization in classes of integro‐differential equations with completely monotonic kernels: Weighted asymptotic convergence}, Numer. Methods Part. Differ. Equ., 32 (2016), pp.~896--935.


   \bibitem {Xu3} {\sc D. Xu}, {\em Numerical asymptotic stability for the integro-differential equations with the multi-term kernels}, Appl. Math. Comput., 309 (2017), pp.~107--132.

   \bibitem {Xu4} {\sc D. Xu}, {\em Second-order difference approximations for Volterra equations with the completely monotonic kernels}, Numer. Algorithms, 81 (2019), pp.~1003--1041.


    \bibitem {Yan} {\sc Y. Yan, G. Fairweather}, {\em Orthogonal spline collocation methods for some partial integrodifferential equations}, SIAM J. Numer. Anal., 29 (1992), pp.~755--768.





\end{thebibliography}
\end{document}